\newtheorem{theorem}{Theorem}[section]
\newtheorem{proposition}[theorem]{Proposition}
\newtheorem{lemma}[theorem]{Lemma}
\newtheorem{follow}[theorem]{Corollary}
\theoremstyle{definition}
\newtheorem{remark}[theorem]{Remark}
\newcommand{\bel}{\begin{equation} \label}
\newcommand{\ee}{\end{equation}}
\newcommand{\pd}{\partial}
\newcommand{\supp}{{\text{supp}}}
\newcommand{\C}{{\mathbb C}}
\newcommand{\R}{{\mathbb R}}
\newcommand{\re}{\mathfrak R}
\newcommand{\im}{\mathfrak I}
\newcommand{\sA}{{\mathscr A}}
\newcommand{\cB}{{\mathcal B}}
\newcommand{\cC}{{\mathcal C}}
\newcommand{\cH}{{\mathcal H}}
\newcommand{\sH}{{\mathscr H}}
\newcommand{\sK}{{\mathscr K}}
\newcommand{\sL}{{\mathscr L}}
\newcommand{\cM}{{\mathcal M}}
\newcommand{\cT}{{\mathcal T}}
\newcommand{\cU}{{\mathcal U}}
\newcommand{\sV}{{\mathscr V}}
\newcommand{\cZ}{{\mathcal Z}}
\newcommand{\Z}{{\mathbb Z}}
\def\epsilon{\varepsilon}
\def\phi {\varphi}
\def\beq{\begin{equation}}
\def\eeq{\end{equation}}
\renewcommand{\leq}{\leqslant}
\renewcommand{\geq}{\geqslant}
\newcommand{\bea}{\begin{eqnarray}}
\newcommand{\eea}{\end{eqnarray}}
\newcommand{\beas}{\begin{eqnarray*}}
\newcommand{\eeas}{\end{eqnarray*}}
\providecommand{\abs}[1]{\left\lvert#1\right\rvert}
\providecommand{\norm}[1]{\left\lVert#1\right\rVert}
\title[On the Calder\`on problem in periodic cylindrical domain with partial data]{On the Calder\`on problem in periodic cylindrical domain with partial Dirichlet and Neumann data}
\author{Mourad Choulli, Yavar Kian, Eric Soccorsi}
\date{}
\begin{document}
\begin{abstract}
We consider the Calder\`on problem in an infinite cylindrical domain, whose cross section is a bounded domain of the plane.
We prove log-log stability in the determination of the isotropic periodic conductivity coefficient from partial Dirichlet data and partial Neumann boundary observations of the solution.
\end{abstract}
\maketitle


\section{Introduction}
\label{sec-intro}


Let $\omega$ be a bounded domain of $\mathbb{R}^2$ which contains the origin, with a $C^{2}$ boundary. 
Set $\Omega:=\R \times \omega$ and denote any point $x \in \Omega$ by $x=(x_1,x')$, where $x_1 \in \R$ and $x':=(x_2,x_3) \in \omega$.
Given $V \in L^\infty(\Omega)$, real-valued and $1$-periodic with respect to $x_1$, i.e. 
\bel{a0}
V(x_1+1,x')=V(x_1,x'),\ x'\in\omega,\ x_1\in\R,
\ee
we consider the boundary value problem (BVP) with non-homogeneous Dirichlet data $f$,
\bel{eq1}
\left\{
\begin{array}{rcll} 
(-\Delta + V ) u& = & 0, & \mbox{in}\ \Omega,\\ 
u & = & f, & \mbox{on}\ \Gamma := \pd \Omega = \R \times \pd \omega.
\end{array}
\right.
\ee
Next we fix $\xi_0 \in \mathbb S^1:=\{ y \in \R^2;\ \abs{y} = 1 \}$ and define the $\xi_0$-shadowed (resp., $\xi_0$-illuminated) face of $\pd \omega$ as
\bel{a0b} 
\pd \omega_{\xi_0}^+ := \{ x' \in \pd \omega;\ \nu'(x') \cdot \xi_0 \geq 0\}\ (\mbox{resp.}\ \pd \omega_{\xi_0}^- := \{ x' \in \pd \omega;\ \nu'(x') \cdot \xi_0 \leq 0 \}),
\ee
where $\nu'$ is the outgoing unit normal vector to $\pd \omega$. Here and henceforth the symbol $\cdot$ denotes the Euclidian scalar product in $\R^k$, $k \geq 2$, and $\abs{y}:=(y \cdot y)^\frac{1}{2}$ for all $y \in \R^k$. 

Then for any closed neighborhood $G'$ of $\pd \omega_{\xi_0}^-$ in $\pd \omega$, we know from \cite[Theorem 1.1]{CKS2} that knowledge of the partial Dirichlet-to-Neumann (DN) map restricted to $G:=\R \times G'$, 
\bel{a1}
\Lambda_V : f \mapsto \pd_\nu u_{| G}, 
\ee
uniquely and logarithmic-stably determines $V$. Here we used the usual notation $\pd_\nu u := \nabla u \cdot \nu$,
where $\nabla$ denotes the gradient operator with respect to $x \in \Omega$, $u$ is the solution to \eqref{eq1}, and
$$ \nu(x_1,x') :=(0,\nu'(x')),\ x=(x_1,x') \in \Gamma, $$
is the outward unit vector normal to $\Gamma$. Otherwise stated, the unknown potential $V$ appearing in the first line of \eqref{eq1} can be stably recovered from boundary observation of the current flowing through $G$, upon probing the system \eqref{eq1} with non-homogeneous Dirichlet data. 

Notice that in the above mentioned result, only the output, i.e. the measurement of the current flowing across $\Gamma$, is local (or partial) in the sense that it is performed on $G$ and not on the whole boundary $\Gamma$, while the input, i.e. the Dirichlet data, remains global as it is possibly supported everywhere on $\Gamma$. Therefore, \cite[Theorem 1.1]{CKS2} claims logarithmic stability in the inverse problem of determining the electric potential $V$ in the first line of \eqref{eq1} from the knowledge of partial Neumann, and full Dirichlet, data. In the present paper, we aim for the same type of result under the additional constraint that not only the Neumann data, but also the Dirichlet data, be partial. Namely, given an arbitrary closed neighborhood $F'$ of $\pd \omega_{\xi}^+$ in $\pd \omega$, such that
\bel{a3}
F' \cap G' \neq \emptyset\ \mbox{and}\ F' \cup G' = \pd \omega,
\ee
we seek stable identification of $V$ by the input-restricted DN map \eqref{a1} to Dirichlet data functions $f$ supported in 
$F:= \R \times F'$.


\subsection{State of the art}
Since the seminal paper \cite{Ca} by Calder\'on, the electrical impedance tomography problem, or Calder\'on problem, of retrieving the conductivity from the knowledge of the DN map on the boundary of a bounded domain, has attracted many attention. If the conductivity coefficient is scalar, then the Liouville transform allows us to rewrite the Calder\'on problem into the inverse problem of determining the electric potential in Laplace operator, from boundary measurements. There is an extensive literature on the Calder\'on problem.
For isotropic conductivities, a great deal of work has been spent to weaken the regularity assumption on the conductivity, in the study of the uniqueness issue, see e.g. \cite{BT,HT}. 
In all the above mentioned papers, the full DN map are needed, i.e. lateral observations are performed on the whole boundary. The first uniqueness result from partial data for the Calder\'on problem, was obtained in dimension 3 or greater, by Bukhgeim and Uhlmann in \cite{BU}. Their result, which requires that Dirichlet data be imposed on the whole boundary, and that Neumann data boundary be observed on slightly more than half of the boundary, was improved by Kenig, Sj\"ostrand and Uhlmann in \cite{KSU}, where both input and ouput data are measured on subsets of the boundary. In the two-dimensional case, Imanuvilov, Uhlmann and Yamamoto proved in \cite{IUY1, IUY2} that the partial DN map uniquely determines the conductivity. We also mention that the special case of the Calder\'on problem in a bounded cylindrical domain of $\R^3$, was treated in \cite{IY1}.

The stability issue for the Calder\'on problem was addressed by Alessandrini in \cite{Al}. He proved a log-type stability estimate with respect to the full DN map. Such a result, which is known to be optimal, see \cite{Ma}, degenerates to log-log stability with partial Neumann data, see \cite{HW, CKS1}. In \cite{CDR1, CDR2}, Caro, Dos Santos Ferreira and Ruiz proved stability results of log-log type, corresponding to the uniqueness results of \cite{KSU} in dimension 3 or greater. We refer to \cite{BIY,NSa,Sa} for stability estimates associated with the two-dimensional Calder\'on problem, and we point out that both the electric and the magnetic potentials are stably determined by the partial DN map in \cite{T}.

Notice that all the above mentioned papers are concerned with the Calder\'on problem in a bounded domain. It turns out that there is only a small number of mathematical papers dealing with inverse coefficients problems in an unbounded domain. Several authors considered the problem of recovering  coefficients in an unbounded domain from boundary measurements. Object identification in an infinite slab, was proved in \cite{Ik, SW}. Unique determination of a compactly supported electric potential of the Laplace equation in an infinite slab, by partial DN map, is established in \cite{LU}. This result was extended to the magnetic case in \cite{KLU}, and to bi-harmonic operators in \cite{Y}. 

The stability issue in inverse coefficients problems stated in an infinite cylindrical waveguide is addressed in \cite{BKS, CKS, CKS1,CS,KKS,Ki,KPS1,KPS2}, and a log-log type stability estimate by partial Neumann data for the periodic electric potential of the Laplace equation can be found in \cite{CKS2}. In the present paper, we are aiming for the same result as in \cite{CKS2}, where the full Dirichlet data is replaced by partial voltage. The proof the corresponding stability estimate relies on two different types of complex geometric optics (CGO) solutions to the quasi-periodic Laplace equation in $(0,1) \times \omega$, which are supported in $F$. These functions are built in Section \ref{sec-proofpr1} by means of a suitable Carleman estimate. This technique is inspired by \cite{KSU}, but, in contrast to \cite{CDR1,CDR2}, due to the quasi-periodic boundary conditions imposed on the CGO solution, we cannot apply the Carleman estimate of \cite{KSU}, here.


\subsection{Settings and main result}
\label{sec-set}
We stick with the notations of \cite{CKS2} and denote by $C_\omega$ the square root of the first eigenvalue of the Dirichlet Laplacian in $L^2(\omega)$, that is the largest of those positive constants $c$, such that the Poincar\'e inequality 
\bel{inegp1}
\| \nabla' u \|_{L^2(\omega)} \geq c \| u \|_{L^2(\omega)},\ u \in H_0^1(\omega),
\ee
holds true. Here $\nabla':=(\pd_{x_2},\pd_{x_3})$ stands for the gradient with respect to $x'=(x_2,x_3)$. This can be equivalently reformulated as
\bel{p1const}
C_\omega := \sup \{ c >0\ \mbox{satisfying}\ \eqref{inegp1} \}.
\ee
Next, for $M_- \in (0,C_\omega)$ and $M_+ \in [M_-,+\infty)$, we introduce the set $\mathscr{V}_\omega(M_\pm)$ of admissible unknown potentials in the same way as in \cite[Sect. 1.2]{CKS2}:
\bel{admpot}
\mathscr{V}_\omega(M_\pm) := \{ V \in L^{\infty}(\Omega;\R)\ \mbox{satisfying}\ \eqref{a0},\ \| V \|_{L^{\infty}(\Omega)} \leq M_+\ \mbox{and}\ \| \max(0,-V) \|_{L^{\infty}(\Omega)} \leq M_- \}.
\ee


Before stating the main result of this paper we need to define the DN map associated with the BVP \eqref{eq1} and $V \in \mathscr{V}_\omega(M_\pm)$. 
To this end, we introduce the Hilbert space $H_\Delta(\Omega):=\{ u \in L^2(\Omega);\ \Delta u \in L^2(\Omega)\}$, endowed with the norm
$$ \norm{u}^2_{H_\Delta(\Omega)} :=\norm{u}_{L^2(\Omega)}^2+\norm{\Delta u}_{L^2(\Omega)}^2, $$
and refer to \cite[Lemma 2.2]{CKS2} in order to extend the mapping 
$$ \cT_0 u := u_{\vert \Gamma}\ (\mbox{resp.}, \cT_1 u := \pd_\nu u _{\vert \Gamma}),\ u \in C_0^\infty(\overline{\Omega}), $$
into a continuous function $\mathcal T_0 : H_\Delta(\Omega) \to H^{-2}(\R ; H^{-\frac{1}{2}}(\pd \omega))$ (resp., $\mathcal T_1 : H_\Delta(\Omega) \to H^{-2}(\R ; H^{-\frac{3}{2}}(\pd \omega))$). 
Since $\cT_0$ is one-to-one from $B:=\{ u \in L^2(\Omega);\ \Delta u = 0 \}$ onto
$$ \sH(\Gamma):= \mathcal T_0 H_\Delta(\Omega) = \{ \mathcal T_0 u;\ u \in H_\Delta(\Omega) \},  $$
by \cite[Lemma 2.3]{CKS2}, we put
\bel{nhg}
\norm{f}_{\sH(\Gamma)} : =\norm{\mathcal T_0^{-1} f}_{H_\Delta(\Omega)} = \norm{\mathcal T_0^{-1} f}_{L^2(\Omega)},
\ee
where $\mathcal T_0^{-1}$ denotes the operator inverse to $\mathcal{T}_0 : B \to \sH(\Gamma)$. Throughout this text, we consider Dirichlet data in $\sH(\Gamma)$ which are supported in $F$, i.e. input functions belonging to
$$ \sH_c(F):= \{ f \in \sH(\Gamma);\ \supp f \subset F \}. $$
To any $f \in \sH_c(F)$, we associate the unique solution $u \in H_\Delta(\Omega)$ to \eqref{eq1}, given by \cite[Proposition 1.1 (i)]{CKS2}, and define the partial DN map associated with \eqref{eq1}, as
\bel{es0}
\Lambda_V : f \in \sH_c(F) \mapsto \cT_1 u_{| G}.
\ee
Upon denoting by $\cB(X_1,X_2)$, where $X_j$, $j=1,2$, are two arbitrary Banach spaces, the class of bounded operators $T : X_1 \to X_2$, we recall from \cite[Proposition 1.1 (ii)-(iii)]{CKS2} that
\bel{es1}
 \Lambda_V \in \cB(\sH_c(F), H^{-2}(\R,H^{- \frac{3}{2}}(G')))\ \mbox{and}\
\Lambda_V-\Lambda_W \in \cB( \sH_c(F)  , L^2(G)),\ V,\ W \in
\mathscr{V}_\omega(M_\pm). 
\ee

The main result of this article, which claims that unknown potentials of $\mathscr{V}_\omega(M_\pm)$ are stably determined in the elementary cell $\check{\Omega}:=(0,1)\times\omega$, by the partial DN map, is stated as follows.

\begin{theorem}
\label{thm1} 
Let $V_j \in \mathscr{V}_\omega(M_\pm)$, $j=1,2$, where $M_+ \in [M_-,+\infty)$, $M_- \in (0,C_\omega)$, and $C_\omega$ is defined by \eqref{p1const}.
Then, there exist two constants  $C>0$ and $\gamma_*>0$, both of them depending only on $\omega$, $M_\pm$, $F'$, and $G'$, such that the estimate
\bel{thm1a} 
\norm{V_1-V_2}_{H^{-1}(\check{\Omega})} \leq C \Phi \left(\norm{\Lambda_{V_1}-\Lambda_{V_2}}\right),
\ee
holds with
\bel{es2}
\Phi(\gamma) := \left\{ \begin{array}{cl} \gamma & \mbox{if}\ \gamma \geq \gamma^*,\\
(\ln \abs{\ln \gamma})^{-1} & \mbox{if}\ \gamma \in (0, \gamma^*),\\ 
0 & \mbox{if}\ \gamma=0.
\end{array} \right.
\ee
Here $\| \cdot \|$ denotes the usual norm in $\cB(\sH_{c}(F),L^2(G))$. 
\end{theorem}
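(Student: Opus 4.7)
\emph{Proof strategy.} The scheme follows that of \cite[Theorem 1.1]{CKS2}, but the partial Dirichlet hypothesis forces us to replace the standard exponential solutions used there by two families of complex geometric optics (CGO) solutions whose Dirichlet trace is supported in $F$. These are built from a Kenig--Sj\"ostrand--Uhlmann type construction based on a boundary Carleman estimate compatible with the quasi-periodic boundary conditions in $x_1$. More precisely, I would first apply the Fourier--Bloch decomposition in the periodic direction: by \eqref{a0}, the problem on $\Omega$ reduces to a one-parameter family, parametrized by the quasi-momentum $\theta \in [0,2\pi)$, of Calder\'on problems on the elementary cell $\check\Omega=(0,1)\times\omega$, each with quasi-periodic boundary conditions $u(1,x')=e^{i\theta}u(0,x')$ and $\partial_{x_1}u(1,x')=e^{i\theta}\partial_{x_1}u(0,x')$ on the caps $\{0,1\}\times\omega$.

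For each such $\theta$, I would construct CGO solutions
\[
u_j(x)=e^{\rho_j\cdot x}\bigl(a_j(x)+r_j(x;\tau)\bigr),\qquad j=1,2,
\]
of $(-\Delta+V_j)u_j=0$ in $\check\Omega$, with $\rho_j\in\mathbb C^3$ isotropic, $\lvert\mathrm{Im}\,\rho_j\rvert\sim\tau$ large, obeying the quasi-periodic conditions above, and both with Dirichlet trace supported in the transverse set $F'$; the correctors $r_j$ would be of size $\tau^{-1/2}$ in $L^2(\check\Omega)$. The workhorse here is a boundary Carleman estimate for $-\Delta+V$ on $\check\Omega$ with the linear transverse weight $\varphi(x):=\xi_0\cdot x'$, convexified into $\varphi+\eps\varphi^2/2$ as in \cite{KSU}, proved directly by integration by parts on $\check\Omega$: since $\varphi$ is independent of $x_1$, the cap boundary terms produced by this integration by parts pair up across $x_1=0$ and $x_1=1$ via the gauge $e^{i\theta}$ and cancel, leaving only the usual $\partial\omega$ contributions that deliver the needed control on $\partial\omega\setminus F'$. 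A Hahn--Banach duality argument on this estimate then produces the CGOs with the desired trace support, in analogy with \cite{KSU,CDR1}.

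With these two CGOs in hand, the integral identity
\[
\int_{\check\Omega}(V_1-V_2)u_1u_2\,dx=-\bigl\langle(\Lambda_{V_1}-\Lambda_{V_2})(u_1|_\Gamma),u_2|_\Gamma\bigr\rangle,
\]
whose right-hand side makes sense thanks to $\supp u_1|_\Gamma\subset F$ and to the cancellation of the cap contributions induced by the opposite quasi-momenta of $u_1$ and $u_2$, combined with the choice $\rho_1+\rho_2=i\zeta$ for a prescribed Fourier vector $\zeta=(2\pi n+\theta,\xi')$ and the asymptotics $r_j=O(\tau^{-1/2})$, yields an estimate of the form
\[
\bigl\lvert\widehat{V_1-V_2}(n,\xi')\bigr\rvert\leq C\bigl(\tau^{-1/2}+e^{C\tau}\|\Lambda_{V_1}-\Lambda_{V_2}\|\bigr),
\]
uniformly for $\lvert\zeta\rvert\lesssim\tau$. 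Reassembling these slice-wise estimates into the $H^{-1}(\check\Omega)$ norm of $V_1-V_2$, splitting the frequency space into low frequencies $\lvert\zeta\rvert\leq r$ (controlled by the previous bound) and high frequencies $\lvert\zeta\rvert>r$ (absorbed by the $L^\infty$ bound on $V_j$ via the negative Sobolev exponent), and optimizing successively in $\tau$ and in $r$, yields the stability estimate \eqref{thm1a}--\eqref{es2}, exactly as in \cite{CKS2}.

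The principal obstacle, explicitly flagged by the authors in the introduction, is the derivation of the quasi-periodic boundary Carleman estimate and the matching CGO construction with prescribed trace support in $F'$: the standard arguments of \cite{KSU,CDR1,CDR2} produce unwanted contributions on the periodic caps $\{0,1\}\times\omega$, and closing the estimate depends essentially on the $x_1$-independence of the weight $\varphi$ and on the gauge structure of the quasi-periodic boundary conditions.
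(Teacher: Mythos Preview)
Your overall architecture (Floquet reduction to quasi-periodic cell problems, CGO solutions built by Hahn--Banach from a boundary Carleman estimate whose weight is $x_1$-independent so that the cap contributions cancel, Fourier estimate plus high/low frequency splitting and two-parameter optimization) matches the paper's. But there is a genuine gap at the integral-identity step.

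You assert that \emph{both} CGOs $u_1,u_2$ have Dirichlet trace supported in $F'$. With that choice, the identity
\[
\int_{\check\Omega}(V_1-V_2)\,u_1u_2\,dx=-\bigl\langle(\Lambda_{V_1}-\Lambda_{V_2})(u_1|_{\check\Gamma}),\,u_2|_{\check\Gamma}\bigr\rangle
\]
does not close with partial Neumann data: $(\Lambda_{V_1}-\Lambda_{V_2})(u_1|_{\check\Gamma})$ is only measured on $\check G$, so the pairing on the right requires $\supp(u_2|_{\check\Gamma})\subset\check G$, not $\check F$. Since in general $F'\not\subset G'$ (only $F'\cup G'=\partial\omega$ is assumed), the right-hand side is not determined by the data.

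The paper closes this gap differently from what you sketch. Only \emph{one} CGO, $u_{\zeta_2}$, is built with prescribed trace support in $\check F$ (Proposition~\ref{pr1}, via the duality Lemma~\ref{lm2}); the other, $u_{\zeta_1}$, is a \emph{standard} $H^2_{\mathrm{per}}$ CGO from \cite{CKS2} with no trace constraint. The Neumann localization to $\check G$ is then handled by a \emph{second} application of the Carleman estimate, Bukhgeim--Uhlmann style (Lemma~\ref{lm-pre}): one writes $\int_{\check\Omega}Vu_{\zeta_2}\overline{u_{\zeta_1}}=\int_{\check\Gamma}(\partial_\nu u)\overline{u_{\zeta_1}}$ with $u$ solving \eqref{f6}, splits $\check\Gamma=\Gamma_{\xi,\epsilon}^+\cup\Gamma_{\xi,\epsilon}^-$, and uses \eqref{ce} to absorb the piece on $\Gamma_{\xi,\epsilon}^+\supset\check\Gamma\setminus\check G$. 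Your proposal omits this step (or its KSU substitute: taking $u_2$ with trace support in $G'$, produced by the Carleman estimate with the opposite sign of $\xi$).

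A secondary difference: the paper works throughout with the \emph{linear} weight $e^{-\tau\xi\cdot x'}$ (Lemma~\ref{lm-ce}, taken from \cite{CKS2}), not the convexified $\varphi+\tfrac{\eps}{2}\varphi^2$. The authors note explicitly that the KSU Carleman estimate does not transplant directly; your remark that $x_1$-independence of $\varphi$ kills the cap terms is the correct mechanism, but the paper sidesteps the issue by recycling the BU-type estimate already available, which suffices once the Neumann localization is done as above.
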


The statement of Theorem \ref{thm1} remains valid for any periodic potential $V \in L^{\infty}(\Omega)$, provided $0$ lies in the resolvent set of $A_V$, the self-adjoint realization  in $L^2(\Omega)$ of the Dirichlet Laplacian $-\Delta + V$. In this case, the multiplicative constants $C$ and $\gamma_*$, appearing in \eqref{thm1a}-\eqref{es2}, depend on (the inverse of) the distance $d>0$, between $0$ and the spectrum of $A_V$. In the particular case where $V \in \sV_\omega(M_\pm)$, with $M_- \in (0,C_\omega)$, we have $d \geq C_\omega- M_-$, and the implicit condition $d>0$ imposed on $V$, can be replaced by the explicit one on the negative part of the potential, i.e. $\| \max(0,-V) \|_{L^\infty(\Omega)} \leq M_-$. 

\subsection{Application to the Calder\'on Problem}
\label{sec-CalderonPb}
The inverse problem addressed in Subsection \ref{sec-set} is closely related to the periodic Calder\'on problem in $\Omega$, i.e. the inverse problem of determining the conductivity coefficient $a$, obeying
\bel{a-per}
a(x_1+1,x') = a(x_1,x'),\ x'\in\omega,\ x_1 \in \R,
\ee
from partial boundary data of the BVP in the divergence form
\bel{a-eq1}
\left\{
\begin{array}{rcll} 
-\mbox{div}( a \nabla u ) & = & 0, & \mbox{in}\ \Omega,\\ 
u & = & f, & \mbox{on}\ \Gamma.
\end{array}
\right.
\ee
Let $\cT_0$ denote the trace operator $u \mapsto u_{\vert \Gamma}$ on $H^1(\Omega)$. We equip the space
$\sK(\Gamma):=\cT_0(H^1(\Omega))$ with the norm
$$ \norm{f}_{\sK(\Gamma)}:=\inf\{\norm{u}_{H^1(\Omega)};\ \cT_0 u = f\}, $$
and recall for any $a \in C^1(\overline{\Omega})$ satisfying the ellipticity condition
\bel{ca1}
a(x) \geq a_* >0,\ x \in \Omega,
\ee
for some fixed positive constant $a_*$, that the BVP \eqref{a-eq1} admits a unique solution $u \in H^1(\Omega)$ for each $f \in \sK(\Gamma)$.
Moreover, the full DN map associated with \eqref{a-eq1}, defined by $f \mapsto a \cT_1 u$, where $\cT_1 u := \partial_\nu u_{\vert \Gamma}$, 
is a bounded operator from $\sK(\Gamma)$ to $H^{-1}(\R;H^{-\frac{1}{2}}(\pd \omega))$. Here, we rather consider the partial DN map,
\bel{ca2} 
\Sigma_a : f \in \sK(\Gamma) \cap a^{-\frac{1}{2}}(\sH_c(F)) \mapsto a \cT_1 u_{\vert G},
\ee
where $a^{-\frac{1}{2}}(\sH_c(F)):= \{a^{-\frac{1}{2}}f;\ f \in \sH_c(F) \}$.

Further, since the BVP \eqref{a-eq1} is brought by the Liouville transform into the form \eqref{eq1}, with $V_a:=a^{-\frac{1}{2}} \Delta a^{\frac{1}{2}}$, then, 
with reference to \eqref{admpot}, we impose that $V_a$ be bounded in $\Omega$ and satisfies the following conditions
\bel{ca2b}
\| V_a \|_{L^{\infty}(\Omega)} \leq M_+\ \mbox{and}\ \| \max(0,-V_a) \|_{L^{\infty}(\Omega)} \leq M_-,
\ee
where $M_- \in (0,C_\omega)$ and $M_+ \in [M_-,+\infty)$ are {\it a priori} arbitrarily fixed constants.
Namely, we introduce the set of admissible conductivities, as
\bel{def-a} 
\sA_\omega(a_*,M_\pm) := \left\{ a \in C^1(\overline{\Omega};\R)\ \mbox{satisfying}\ \Delta a \in L^{\infty}(\Omega),\ \| a \|_{W^{1,\infty}(\Omega)} \leq M_+, \eqref{a-per}, \eqref{ca1}, \mbox{and}\ \eqref{ca2b} \right\}.
\ee
We check by standard computations that the condition \eqref{ca2b}
is automatically verified, provided the conductivity $a \in \sA_\omega(a_*,M_\pm)$ is taken so small that $\| a \|_{W^{1,\infty}(\Omega)}^2 + 2 a_* \| \Delta a \|_{L^{\infty}(\Omega)} \leq 4 M_- a_*^2$, or even that
$$ \| a \|_{W^{2,\infty}(\Omega)} \leq \frac{4 M_-}{(4 M_- +1)^{\frac{1}{2}}+1} a_*, $$
in the particular case where $a \in W^{2,\infty}(\Omega)$.

The main result of this section claims stable determination of such admissible conductivities $a$, from the knowledge of $\Sigma_a$. It is stated as follows.

\begin{follow}
\label{cor-a} 
Fix $a_*>0$, and let $M_\pm$ be as in Theorem \ref{thm1}. Pick $a_j \in \sA_\omega(a_*,M_\pm)$, for $j=1,2$, obeying
\bel{ca3} 
a_1(x)=a_2(x),\ x \in \pd \Omega\ 
\ee
and
\bel{ca4} 
\pd_\nu a_1(x)=\pd_\nu a_2(x),\ x \in F \cap G.
\ee
Then $\Sigma_{a_1}-\Sigma_{a_2}$ is extendable to a bounded operator from $a_1^{-\frac{1}{2}}(\sH_c(F))$ into $L^2(G)$. Moreover, there exists two constant $C>0$ and $\gamma_*>0$, both of them depending only on $\omega$, $M_\pm$, $a_*$, $F'$, and $G'$, such that we have
\bel{ca5}
\norm{a_1-a_2}_{H^1(\check{\Omega})} \leq C \Phi \left( a_*^{-\frac{1}{2}} \norm{\Sigma_{a_1}-\Sigma_{a_2}} \right),
\ee
where $\Phi$ is the same as in Theorem \ref{thm1}.
Here $\norm{\cdot}$ denotes the usual operator norm in $\cB(a_1^{-\frac{1}{2}}(\sH_c(F)), L^2(G))$.
\end{follow}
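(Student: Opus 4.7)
The plan is to reduce Corollary \ref{cor-a} to Theorem \ref{thm1} through the Liouville transform $u = a^{-1/2} v$, which converts \eqref{a-eq1} into $(-\Delta + V_a) v = 0$ with $V_a := a^{-1/2} \Delta a^{1/2}$ and boundary value $v|_\Gamma = a^{1/2}|_\Gamma \cdot u|_\Gamma$. Under \eqref{ca2b}, we have $V_{a_j} \in \sV_\omega(M_\pm)$ for $j=1,2$, so Theorem \ref{thm1} is applicable to the pair $(V_{a_1}, V_{a_2})$. The two remaining tasks are then (i) to control $\Lambda_{V_{a_1}} - \Lambda_{V_{a_2}}$ by $\Sigma_{a_1} - \Sigma_{a_2}$, and (ii) to convert an $H^{-1}$-estimate on the potential difference into an $H^1$-estimate on the conductivity difference.

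For the first task, given $g \in \sH_c(F)$, set $f := a_1^{-1/2}|_\Gamma \, g$; by \eqref{ca3} this equals $a_2^{-1/2}|_\Gamma \, g$, is supported in $F$, and lies in $a_1^{-1/2}(\sH_c(F))$. The Liouville transform $v_j := a_j^{1/2} u_j$ has boundary value $g$, and a direct computation using $\pd_\nu v_j = u_j\, \pd_\nu (a_j^{1/2}) + a_j^{1/2} \pd_\nu u_j$ on $\Gamma$ yields
\[
(\Lambda_{V_{a_j}} g)|_G = \tfrac{1}{2} (a_j^{-1} \pd_\nu a_j)|_G \, g|_G + a_j^{-1/2}|_G \, \Sigma_{a_j}(f).
\]
Subtracting the two identities for $j=1,2$ and using that $g|_G$ is supported in $F \cap G$, where the zeroth-order coefficient $a_j^{-1} \pd_\nu a_j$ is independent of $j$ by \eqref{ca3}--\eqref{ca4}, one obtains
\[
(\Lambda_{V_{a_1}} - \Lambda_{V_{a_2}})(g) = a_1^{-1/2}|_G \, (\Sigma_{a_1} - \Sigma_{a_2})(f) \quad \mbox{on } G.
\]
This identity simultaneously extends $\Sigma_{a_1} - \Sigma_{a_2}$ to a bounded operator in $\cB(a_1^{-1/2}(\sH_c(F)), L^2(G))$ as claimed and gives the norm bound $\|\Lambda_{V_{a_1}} - \Lambda_{V_{a_2}}\| \leq a_*^{-1/2} \|\Sigma_{a_1} - \Sigma_{a_2}\|$. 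Theorem \ref{thm1} then yields $\|V_{a_1} - V_{a_2}\|_{H^{-1}(\check{\Omega})} \leq C \Phi(a_*^{-1/2} \|\Sigma_{a_1} - \Sigma_{a_2}\|)$.

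The final step, which I expect to be the main obstacle, converts this $H^{-1}$-estimate on potentials into the $H^1$-estimate \eqref{ca5}. Set $b_j := a_j^{1/2}$ and $w := b_1 - b_2$. The identity $\Delta b_j = V_{a_j} b_j$ (equivalent to the definition of $V_{a_j}$) gives
\[
(-\Delta + V_{a_1}) w = -(V_{a_1} - V_{a_2}) b_2 \quad \mbox{in } \check{\Omega},
\]
with $w=0$ on $(0,1)\times \pd \omega$ by \eqref{ca3} and periodic traces on the end faces $\{0,1\} \times \omega$ by \eqref{a-per}. Since $M_- < C_\omega$, the operator $-\Delta + V_{a_1}$ is coercive from $H^1$ to $H^{-1}$ on the subspace of $H^1(\check{\Omega})$ encoding those mixed periodic/Dirichlet boundary conditions, so
\[
\|w\|_{H^1(\check{\Omega})} \leq C \|(V_{a_1} - V_{a_2}) b_2\|_{H^{-1}(\check{\Omega})} \leq C \|b_2\|_{W^{1,\infty}} \, \|V_{a_1} - V_{a_2}\|_{H^{-1}(\check{\Omega})},
\]
where the last step uses that multiplication by $b_2 \in W^{1,\infty}$ is bounded on $H^{-1}(\check{\Omega})$ by duality with the $H^1$-mapping property. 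Finally, $a_1 - a_2 = (b_1 + b_2) w$ and the uniform bounds $\|b_j\|_{W^{1,\infty}(\Omega)} \leq C(M_+, a_*)$ upgrade this into \eqref{ca5}. The technical care lies in establishing the coercivity on the elementary cell under mixed periodic/Dirichlet conditions, which is obtained by combining the Poincar\'e inequality \eqref{inegp1} in $\omega$ (applied slicewise in $x_1$) with the lower bound $V_{a_1} \geq -M_-$.
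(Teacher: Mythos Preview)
Your argument is essentially the paper's own: Liouville transform, relate $\Sigma_{a_j}$ to $\Lambda_{V_{a_j}}$, cancel the zeroth-order boundary term using \eqref{ca3}--\eqref{ca4} together with the fact that $g|_G$ is supported in $F\cap G$, derive $\|\Lambda_{V_{a_1}}-\Lambda_{V_{a_2}}\|\le a_*^{-1/2}\|\Sigma_{a_1}-\Sigma_{a_2}\|$, and then solve $(-\Delta+V_{a_1})(a_1^{1/2}-a_2^{1/2})=-(V_{a_1}-V_{a_2})\,a_2^{1/2}$ with lateral Dirichlet and $x_1$-periodic conditions to upgrade to an $H^1$ bound on $a_1-a_2$.

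The one place where you should be more careful is the dual norm in the elliptic step. Coercivity of $-\Delta+V_{a_1}$ on
\[
H^1_{0,\mathrm{per}}(\check\Omega):=\{v\in H^1(\check\Omega):\ v|_{\check\Gamma}=0,\ v(1,\cdot)=v(0,\cdot)\}
\]
yields $\|w\|_{H^1}\le C\,\|(V_{a_1}-V_{a_2})b_2\|_{(H^1_{0,\mathrm{per}}(\check\Omega))'}$, not a bound by the $H^{-1}(\check\Omega)=(H^1_0(\check\Omega))'$ norm; since $H^1_0(\check\Omega)\subsetneq H^1_{0,\mathrm{per}}(\check\Omega)$, the $H^{-1}$ norm is the \emph{weaker} of the two, so Theorem~\ref{thm1} as stated does not directly close the loop. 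The paper handles this by observing that the \emph{proof} of Theorem~\ref{thm1} (through the Fourier-coefficient estimate of Lemma~\ref{lm-pre}) actually delivers the stronger bound
\[
\|V_{a_1}-V_{a_2}\|_{(H^1_{0,\mathrm{per}}(\check\Omega))'}\le C\,\Phi\!\left(\|\Lambda_{V_{a_1}}-\Lambda_{V_{a_2}}\|\right),
\]
and then proceeds exactly as you do. You should insert the same remark; otherwise your derivation is correct and matches the paper.
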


\subsection{Floquet decomposition}
\label{sec-Floquet}
In this subsection, we reformulate the inverse problem presented in Subsection \ref{sec-set} into a family of inverse coefficients problems associated with the BVP
\bel{eq2}
\left\{
\begin{array}{rcll} 
(-\Delta + V ) v & = & 0, & \mbox{in}\ \check{\Omega}:=(0,1) \times \omega,\\ 
v & = & g, & \mbox{on}\ \check{\Gamma} := (0,1) \times \pd \omega, \\
v(1,\cdot) - e^{i \theta} v(0,\cdot) & = & 0, & \mbox{in}\ \omega, \\
\pd_{x_1} v(1,\cdot) - e^{i \theta} \pd_{x_1} v(0,\cdot) & = & 0, & \mbox{in}\ \omega,
\end{array}
\right.
\ee
for $\theta \in [0, 2 \pi)$, and suitable Dirichlet data $g$.
This is by means of the Floquet-Bloch-Gel'fand (FBG) transform introduced in \cite[Section 3.1]{CKS2}. We stick with the notations of \cite[Section 3.1]{CKS2}, and, for $Y$ being either $\omega$ of $\pd \omega$, we denote by $\cU$ the FBG transform from $L^2(\R \times Y)$ onto $\int_{(0,2\pi)}^\oplus L^2((0,1) \times Y) \frac{d \theta}{2 \pi}$. That is to say, the FBG transform $\cU$ maps $L^2(\Omega)$ onto $\int_{(0,2\pi)}^\oplus L^2(\check{\Omega}) \frac{d \theta}{2 \pi}$ if $Y=\omega$, and $L^2(\Gamma)$ onto $\int_{(0,2\pi)}^\oplus L^2(\check{\Gamma}) \frac{d \theta}{2 \pi}$ when $Y=\pd \omega$. We recall that the operator $\cU$ is unitary in both cases. We start by introducing several functional spaces and trace operators that are needed by the analysis of the inverse problem associated with \eqref{eq2}.

\subsubsection{Functional spaces and trace operators}
Fix $\theta \in [0,2 \pi)$.
With reference to \cite[Section 6.1]{CKS} or \cite[Section 3.1]{CKS2}, we set for each $n \in \mathbb N \cup \{ \infty \}$, 
$$ \mathcal C_{ \theta}^n \left([0,1] \times \overline{\omega}\right):= \left\{ u \in \mathcal C^n\left( [0,1]\times\overline{\omega}\right);\ \pd_{x_1}^j u(1,\cdot) -e^{i\theta} \pd_{x_1}^j u(0,\cdot) = 0\ \mbox{in}\ \omega,\ j \leq n \right\}, $$
and for $Y$ being either $\omega$ or $\pd \omega$, we put
$$ H_\theta^s((0,1) \times Y):= \left\{ u \in H^s((0,1)\times Y);\ \pd_{x_1}^j u(1,\cdot) - e^{i \theta} \pd_{x_1}^j u(0,\cdot) = 0\ \mbox{in}\ \omega,\ j < s - \frac{1}{2} \right\}\ \mbox{if}\ s > \frac{1}{2}, $$
and
$$ H_\theta^s((0,1)\times Y):= H^s((0,1) \times Y)\ \mbox{if}\ s \in \left[ 0 , \frac{1}{2} \right].  $$

Further, we recall from \cite[Eq. (3.29)]{CKS2} that $\cU H_\Delta(\Omega) = \int_{(0,2\pi)}^\oplus H_{\Delta,\theta}(\check{\Gamma}) \frac{d \theta}{2 \pi}$, where
$$ H_{\Delta,\theta}(\check{\Omega}) :=\{ u\in L^2(\check{\Omega});\ \Delta u \in L^2(\check{\Omega})\ \mbox{and}\ u(1,\cdot) - e^{i\theta} u(0,\cdot)= \pd_{x_1} u(1,\cdot) - e^{i\theta} \pd_{x_1} u(0,\cdot) = 0\ \mbox{in}\ \omega \}.
$$
Moreover, the space $\cC_\theta^\infty\left([0,2\pi]\times\overline{\omega}\right)$ is dense in $H_{\Delta,\theta}(\check{\Omega})$, and we have $\cU \cT_j \cU^{-1} = \int_{(0,2\pi)}^\oplus \cT_{j,\theta} \frac{d \theta}{2 \pi}$ for $j=0,1$, where the linear bounded operator
$$ \mathcal T_{j,\theta} : H_{\Delta,\theta}(\check{\Omega}) \to H^{-2}_\theta(0,1, H^{-\frac{2j+1}{2}}(\pd \omega)),$$ 
fulfills $\cT_{0,\theta} u = u_{\vert \check{\Gamma}}$ if $j=0$, and $\cT_{1,\theta} u=\pd_\nu u_{\vert \check{\Gamma}}$ if $j=1$, provided $u \in \cC_\theta^\infty \left([0,1] \times \overline{\omega}\right)$.
Therefore, putting
$$ \sH_\theta(\check{\Gamma}) :=\{ \cT_{0,\theta}u;\ u \in H_{\Delta,\theta}(\check{\Omega}) \},\ \mbox{and}\ \sH_{c,\theta}(\check{F}) := \{ f \in \sH_\theta(\check{\Gamma}),\ \mbox{supp}\ f \subset \check{F} \}, $$
we get that $\cU \sH(\Gamma) = \int_{(0,2\pi)}^\oplus \sH_\theta(\check{\Gamma}) \frac{d \theta}{2 \pi}$ and $\cU \sH_c(F) = \int_{(0,2\pi)}^\oplus \sH_{c,\theta}(\check{F}) \frac{d \theta}{2 \pi}$. As in \cite[Eq. (3.30)]{CKS2}, the space $\sH_{\theta}(\check{\Gamma})$ is endowed with the norm $\norm{g}_{\sH_\theta(\check{\Gamma})} := \norm{v_g}_{L^2(\check{\Omega})}$, where $v_g$ denotes the unique $L^2(\check{\Omega})$-solution to \eqref{eq2} with $V=0$, given by \cite[Proposition 3.2 (i)]{CKS2}.

\subsubsection{Inverse fibered problems}
Let $V \in \sV_\omega(M_\pm)$, where $M_\pm$ are as in Theorem \ref{thm1}.
Then, for any $f$ be in $\sH_c(F)$, $u$ is the $H_{\Delta}(\Omega)$-solution to \eqref{eq1}, if and only if, for almost every $\theta \in [0, 2 \pi)$,  $( \cU u )_\theta$ is the $H_{\Delta}(\check{\Omega})$-solution to \eqref{eq2}, associated with $g=( \cU f )_\theta \in \sH_{c,\theta}(\check{F})$.
The corresponding partial DN map, defined by $\Lambda_{V,\theta} : g \in \sH_{c,\theta}(\check{F}) \mapsto \cT_{1,\theta} v_{| \check{G}}$, where $v$ is the unique $H_{\Delta}(\check{\Omega})$-solution to \eqref{eq2}, is a bounded operator from $\sH_{c,\theta}(\check{F})$ into $H_\theta^{-2}(0,1;H^{-\frac{3}{2}}(G'))$, and we have
\bel{d3} 
\cU \Lambda_V \cU^{-1} = \int_{(0,2\pi)}^\oplus \Lambda_{V,\theta} \frac{d \theta}{2 \pi},
\ee
according to \cite[Proposition 7.1]{CKS2}. Further, if $V_1$ and $V_2$ are two potentials lying in $\sV_\omega(M_\pm)$, then $\Lambda_{V_1,\theta} - \Lambda_{V_2,\theta} \in \cB(\sH_{c,\theta}(\check{F}), L^2(\check{G}))$, for each $\theta \in [0, 2 \pi)$, by
\eqref{es1} and \eqref{d3}. Moreover, $\Lambda_{V_1} - \Lambda_{V_2}$ being unitarily equivalent to the family of partial DN maps $\{ \Lambda_{V_1,\theta} - \Lambda_{V_2,\theta},\ \theta \in [0, 2 \pi) \}$ , it holds true that
\bel{d4}
\| \Lambda_{V_1}-\Lambda_{V_2}  \|_{\cB(\sH_c(F) , L^2(G) )} = \sup_{\theta \in [0,2 \pi)} \| \Lambda_{V_1,\theta}-\Lambda_{V_2,\theta} \|_{\cB(\sH_{c,\theta}(\check{F}),L^2(\check{G}))}.
\ee
Therefore, it is clear from \eqref{d4} that Theorem \ref{thm1} is a byproduct of the following statement.

\begin{theorem}
\label{thm2} 
Let $M_\pm$ and $V_j$, $j=1,2$, be as in Theorem \ref{thm1}. Fix $\theta \in [0,2\pi)$. Then, there exist two constants $C_\theta>0$ and $\gamma_{\theta,*}>0$, both of them depending only on $\omega$, $M_\pm$, $F'$, and $G'$, such that we have
\bel{thm2a} 
\norm{V_1-V_2}_{H^{-1}(\check{\Omega})} \leq C_\theta \Phi_\theta \left( \norm{\Lambda_{{V}_1,\theta}-\Lambda_{{V}_2,\theta}} \right).
\ee
Here, $\Phi_\theta$ is the function defined in Theorem \ref{thm1}, upon substituting $\gamma_{\theta,*}$ for $\gamma$ in \eqref{es2}, and $\norm{\cdot}$ denotes the usual norm in $\cB(\sH_{c,\theta}(\check{F}),L^2(\check{G}))$.
\end{theorem}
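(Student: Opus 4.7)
My plan is to follow the Bukhgeim--Uhlmann and Kenig--Sj\"ostrand--Uhlmann strategy, adapted to the quasi-periodic fibered BVP \eqref{eq2} via the two families of CGO solutions built in Section \ref{sec-proofpr1}. The proof rests on three ingredients: an Alessandrini-type identity for the partial DN map, substitution of CGO solutions whose Dirichlet traces are supported in $\check F$, and a Fourier/optimization argument producing the log-log rate. Given $f \in \sH_{c,\theta}(\check F)$, let $u_1 \in H_{\Delta,\theta}(\check\Omega)$ solve $(-\Delta + V_1)u_1 = 0$ with $\cT_{0,\theta} u_1 = f$, and let $u_2 \in H_{\Delta,\theta}(\check\Omega)$ solve the adjoint equation $(-\Delta + \overline{V_2})u_2 = 0$. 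A Green identity -- in which the cross boundary terms at $x_1 = 0$ and $x_1 = 1$ cancel thanks to the matching $\theta$-quasi-periodic conditions on $u_1$ and $u_2$ -- yields
\[
\int_{\check\Omega} (V_1 - V_2) u_1 \overline{u_2}\, dx = \int_{\check\Gamma} \bigl((\Lambda_{V_1,\theta} - \Lambda_{V_2,\theta}) f\bigr)\, \overline{\cT_{0,\theta} u_2}\, d\sigma.
\]
Splitting the boundary integral as $\int_{\check G} + \int_{\check\Gamma \setminus \check G}$ and using $\check\Gamma \setminus \check G \subset (0,1)\times\pd\omega_{\xi_0}^-$, the first piece is controlled by $\|\Lambda_{V_1,\theta} - \Lambda_{V_2,\theta}\| \|f\|_{\sH_{c,\theta}(\check F)} \|\cT_{0,\theta} u_2\|_{L^2(\check G)}$, while the second must be forced to be negligible by choosing $u_2$ with trace exponentially small on $(0,1)\times\pd\omega_{\xi_0}^-$.

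For the CGOs, I would use the Carleman estimate from Section \ref{sec-proofpr1} to build, for a large parameter $\tau > 0$ and a frequency $\xi = (\xi_1, \xi') \in (\theta + 2\pi\Z) \times \R^2$, solutions of the form
\[
u_1 = e^{\tau(-\xi_0 + i\eta_1)\cdot x'}\bigl(a_1 + \tau^{-1} r_1\bigr), \quad u_2 = e^{\tau(\xi_0 + i\eta_2)\cdot x'} e^{-i\xi\cdot x}\bigl(a_2 + \tau^{-1} r_2\bigr),
\]
where $\eta_1, \eta_2 \in \R^2$ make each complex phase vector of null square, and the smooth amplitudes $a_j$ are $\theta$-quasi-periodic in $x_1$ to accommodate the BCs of \eqref{eq2}. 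The Carleman estimate both controls the remainders $r_j$ in $L^2(\check\Omega)$ by $O(1)$ and, crucially, allows the Dirichlet trace of $u_1$ to be prescribed inside $\check F$ and the Dirichlet trace of $u_2$ to be exponentially small in $\tau$ on $(0,1)\times\pd\omega_{\xi_0}^-$. The leading phase of $u_1 \overline{u_2}$ reduces to $e^{i\xi\cdot x}$.

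Plugging the CGOs into the Alessandrini identity, the boundary piece on $\check\Gamma\setminus\check G$ is absorbed, the $\check G$-piece is bounded by $C e^{C\tau}\epsilon$ with $\epsilon := \|\Lambda_{V_1,\theta} - \Lambda_{V_2,\theta}\|$, and the left-hand side reproduces, modulo an $O(\tau^{-\beta})$ error for some $\beta > 0$, the Fourier coefficient $\widehat V(\xi) := \int_{\check\Omega} (V_1 - V_2)(x) e^{-i\xi\cdot x}\, dx$. This gives a pointwise bound
\[
|\widehat V(\xi)| \leq C\bigl(e^{C\tau}\epsilon + \tau^{-\beta}\bigr)
\]
on the admissible set of $\xi$. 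Decomposing the $H^{-1}(\check\Omega)$ norm of $V_1 - V_2$ via Fourier series in $x_1$ and Fourier representation in $x'$, splitting into $\{|\xi'| \leq R\}$ and $\{|\xi'| > R\}$, estimating the low-frequency part by the above and the high-frequency part by $CR^{-2}$ from the $L^\infty$ bound on $V_1 - V_2$, and finally optimizing $\tau$ and $R$ as functions of $\epsilon$, produces the double-logarithmic rate $(\ln|\ln\epsilon|)^{-1}$ in the small-data regime and the linear rate above the threshold $\gamma_{\theta,*}$.

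The main obstacle is the CGO construction: one must produce solutions to \eqref{eq2} whose Dirichlet traces vanish outside $\check F$ while satisfying the $\theta$-quasi-periodic BCs at $x_1 \in \{0,1\}$. The Carleman estimate of \cite{KSU}, the standard tool in such partial-data problems, is incompatible with quasi-periodicity, so a dedicated Carleman inequality on $\check\Omega$ must first be proved. Its weight and the associated amplitudes must simultaneously reconcile three constraints: the $\xi_0$-shadowing geometry (to support the trace of $u_1$ inside $F'$ and to make the trace of $u_2$ small on $\pd\omega_{\xi_0}^-$), the $\theta$-quasi-periodicity at $x_1 \in \{0,1\}$, and the linear-phase structure needed to recover $\widehat V(\xi)$ at leading order in the Alessandrini identity.
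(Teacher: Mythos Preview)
Your outline has the right skeleton (integral identity, CGO substitution, Fourier optimization), but it departs from the paper's argument at the boundary-control step and contains two geometric errors that would break it as written.

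First, the inclusion $\check{\Gamma}\setminus\check{G}\subset(0,1)\times\pd\omega_{\xi_0}^-$ is false: $G'$ is a neighborhood of $\pd\omega_{\xi_0}^-$, so $\pd\omega\setminus G'\subset\pd\omega_{\xi_0}^+$. Second, your phase assignment is inverted. The construction of Proposition~\ref{pr1} shows that a CGO with leading modulus $e^{\tau\xi\cdot x'}$ can be made to vanish on the strict $\xi$-shadowed face, hence have trace supported near $\pd\omega_\xi^+$; with $u_1\sim e^{-\tau\xi_0\cdot x'}$ as you wrote, its trace would land in $G'$, not $F'$. Moreover, ``exponentially small in $\tau$'' for the trace of $u_2$ on the bad set is not enough, since the Neumann data it multiplies is itself $O(e^{c\tau})$.

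More importantly, the paper does not control the boundary piece over $\check{\Gamma}\setminus\check{G}$ through the trace of the second CGO at all. Only one CGO, $u_{\zeta_2}$ (Proposition~\ref{pr1}, potential $V_2$), is built with $\cT_{0,\theta}u_{\zeta_2}\in\sH_{c,\theta}(\check{F})$; the other, $u_{\zeta_1}$ (Lemma~\ref{lm-cgo1}, potential $V_1$), is a plain $H^2_{\mathrm{per}}$-CGO with no boundary constraint. Setting $u:=w_1-u_{\zeta_2}$ with $w_1$ as in \eqref{f5}, one has $u_{|\check{\Gamma}}=0$ and $(-\Delta+V_1)u=(V_2-V_1)u_{\zeta_2}$, so Green's formula gives $\int_{\check{\Omega}}(V_2-V_1)u_{\zeta_2}\overline{u_{\zeta_1}}\,dx=\int_{\check{\Gamma}}(\pd_\nu u)\overline{u_{\zeta_1}}\,d\sigma$. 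The Carleman estimate of Lemma~\ref{lm-ce} is then applied to $u$ itself: it bounds the weighted Neumann data on $\check{\Gamma}_\xi^+$ in terms of that on $\check{\Gamma}_\xi^-$ plus a source term, and since $\pd\omega_{\epsilon,\xi}^-\subset G'$ by \eqref{f1} and $\pd_\nu u=(\Lambda_{V_1,\theta}-\Lambda_{V_2,\theta})(\cT_{0,\theta}u_{\zeta_2})$ there, the full boundary integral is controlled by the partial DN map. This Bukhgeim--Uhlmann mechanism yields \eqref{f0} (Lemma~\ref{lm-pre}); the Fourier and optimization steps you describe then follow \cite[Section~6]{CKS2}.
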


We notice that the constants $C_{\theta}$ and $\gamma_{\theta,*}$ of Theorem \ref{thm2}, may possibly depend on $\theta$. Nevertheless, we infer from \eqref{d4} that this is no longer the case for $C$ and $\gamma$, appearing in the stability estimate \eqref{thm1a} of Theorem \ref{thm1}, as we can choose $C=C_{\theta}$ and $\gamma=\gamma_{\theta,*}$ for any arbitrary $\theta \in [0, 2 \pi)$. Therefore, we may completely leave aside the question of how $C_{\theta}$ and $\gamma_{\theta,*}$ depend on $\theta$. For this reason, we shall not specify the possible dependence with respect to $\theta$ of the various constants appearing in the remaining part of this text. Finally, we stress out that the function $\Phi_\theta$ does actually depend on $\theta$ through the constant $\gamma_\theta$, as it is obtained by substituting $\gamma_\theta$ for $\theta$ in the definition \eqref{es2}.

\subsection{Outline}
The remaining part of this text is organized as follows. In Sections \ref{sec-gos} and \ref{sec-proofpr1}, we build the two different types of (CGO) solutions to the BVP \eqref{eq2}, 
for $\theta \in [0, 2 \pi)$, needed for the proof of Theorem \ref{thm2}, which is presented in Section \ref{sec-proofthm2}. Section \ref{sec-proofcor} contains the proof of Corollary \ref{cor-a}. Finally, a suitable characterization of the space $H_{\Delta,\theta}$, $\theta \in [0, 2 \pi)$, used in Section \ref{sec-proofpr1}, is derived in Section \ref{sec-app} in appendix.

\section{Complex geometric optics solutions}
\label{sec-gos}
In this section we build CGO solutions to the system
\bel{eq3}
\left\{
\begin{array}{rcll} 
(-\Delta + V ) u & = & 0, & \mbox{in}\ \check{\Omega},\\ 
u(1,\cdot ) & = & e^{i\theta} u(0,\cdot), &  \mbox{on}\ \omega, \\
\pd_{x_1} u(1,\cdot ) & = & e^{i\theta} \pd_{x_1} u(0,\cdot), &  \mbox{on}\ \omega, \\
\end{array}
\right.
\ee
where $\theta \in [0,2\pi)$ and the real valued potential $V \in L^\infty(\check{\Omega})$ are arbitrarily fixed.
More precisely, we seek a sufficiently rich set of solutions $u_\zeta$ to \eqref{eq3}, parametrized by 
\bel{e1} 
\zeta \in \cZ_\theta := \{ \zeta \in i (\theta+ 2 \pi \Z ) \times \C^2;\ | \re{\zeta} | = | \im{\zeta} | \ \mbox{and}\  \re{\zeta} \cdot  \im{\zeta} = 0  \},
\ee
of the form
\bel{e2}
u_\zeta(x)=\left( 1 + v_\zeta(x) \right) e^{{\zeta}\cdot x},\ x \in \check{\Omega},
\ee
where the behavior of the function $v_\zeta$ with respect to $\zeta$, is prescribed in a sense we shall specify further. Notice from definition \eqref{e1} that for all $\zeta \in \cZ_\theta$, we have
\bel{e3}
\Delta e^{\zeta \cdot x}=0,\ x \in \Omega. 
\ee

Actually, the analysis carried out in this text requires two different types of CGO solutions to \eqref{eq3}, denoted generically by $u_{\zeta_1}$ and $u_{\zeta_2}$, with different features we shall make precise below. The corresponding parameters $\zeta_1$ and $\zeta_2$ are the same as in \cite[section 4]{CKS2}. Namely, given $k \in \Z$ and $\eta \in \R^2\setminus \{0\}$, we pick $\xi \in \mathbb S^1$ such that $\xi \cdot \eta=0$, and we set
\bel{e4}
\ell = \ell(k,\eta,r,\theta) :=\left\{ \begin{array}{cl}  \left(\theta + 2\pi ([r]+1) \right) \left( 1 , -2 \pi k \frac{\eta}{\abs{\eta}^2} \right), & \textrm{if}\ k\ \mbox{is even},\\
\left(\theta+ 2 \pi \left( [r]+\frac{3}{2} \right) \right) \left( 1 , -2 \pi k \frac{\eta}{\abs{\eta}^2} \right), & \textrm{if}\ k\ \mbox{is odd},
\end{array}
\right.
\ee
for each $r >0$, in such a way that $\ell \cdot (2 \pi k,\eta) = \ell' \cdot \xi =0$. Here $[r]$ stands for the integer part of $r$, that is the unique integer fulfilling 
$[r] \leq r <[r]+1$, and we used the notation $\ell=(\ell_1,\ell') \in \R \times \R^2$. Next, we introduce
\bel{e5}
\tau= \tau(k,\eta,r,\theta) := \sqrt{\frac{\abs{\eta}^2}{4}+ \pi^2 k^2+\abs{\ell}^2},
\ee
and notice that
\bel{e6}
2 \pi r < \tau \leq \frac{\abs{(2 \pi k,\eta)}}{2}+4 \pi (r+1) \left( 1 + \frac{\abs{2 \pi k}}{\abs{\eta}} \right).
\ee
Thus, putting
\bel{e7}
\zeta_1 :=\left(i \pi k,-\tau \xi+i\frac{\eta}{2}\right)+i \ell\ \mbox{and}\ \zeta_2:=\left(-i \pi k, \tau \xi - i\frac{\eta}{2}\right)+ i\ell,
\ee
it is easy to see that
\bel{e8}
\zeta_1, \zeta_2 \in \cZ_\theta\ \mbox{satisfy the equation}\ \zeta_1+\overline{\zeta_2}=i(2 \pi k,\eta). 
\ee

\subsection{Second order smooth CGO solutions}

The first type of CGO solutions $u_{\zeta_1}$ we need are smooth CGO functions, in the sense that the remainder term $v_{\zeta_1}$ appearing in \eqref{e2} is taken in 
$$ H_{per}^{2}(\check{\Omega}) :=\{ f \in H^2(\check{\Omega});\ f(1,\cdot)=f(0,\cdot)\ \mbox{and}\ \partial_{x_1} f(1,\cdot)=\partial_{x_1} f(0,\cdot)\ \mbox{in}\ \omega \}. $$
The existence of such functions was already established in \cite[Lemma 4.1]{CKS2}, as follows.

\begin{lemma}
\label{lm-cgo1}
Let $V \in \sV_\omega(M_\pm)$. Pick $\theta \in [0, 2 \pi)$, $k \in \Z$, and $\eta \in \R^2 \setminus \{ 0 \}$, and choose $\xi \in \mathbb S^1$ such that $\xi \cdot \eta = 0$. Let $\zeta_1$ be given by \eqref{e7}, as a function of $\tau$, defined in \eqref{e4}-\eqref{e5}.
Then, one can find $\tau_1>0$ so that for all $\tau \geq \tau_1$, there exists $v_{\zeta_1} \in H_{per}^{2}(\check{\Omega})$ such that the function $u_{\zeta_1}$, defined by \eqref{e2} with $\zeta=\zeta_1$, is solution to \eqref{eq3}. Moreover, for every $s \in [0,2]$, the estimate
\bel{e9}
\| v_{\zeta_1} \|_{H^s(\check{\Omega})} \leq C_s \tau^{s-1},
\ee
holds uniformly in $\tau \geq \tau_1$, for some constant $C_s>0$, depending only on $s$, $\omega$, and $M_\pm$.
\end{lemma}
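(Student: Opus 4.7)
The strategy is the classical Sylvester-Uhlmann construction adapted to the quasi-periodic cylinder: the problem is transformed into a perturbed Poisson-type equation for the corrector $v_{\zeta_1}$ via a right inverse of the conjugated Laplacian $L_{\zeta_1} := -\Delta - 2 \zeta_1 \cdot \nabla$, and the resulting integral equation is closed by a Neumann series argument for large $\tau$.

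First, a direct computation using the specific form of $\ell_1$ in \eqref{e4} gives $e^{\zeta_{1,1}} = e^{i \theta}$ in both parity cases of $k$, so the quasi-periodic boundary conditions in \eqref{eq3} on $u_{\zeta_1}$ are equivalent to $1$-periodicity of $v_{\zeta_1}$ in $x_1$. Substituting the ansatz \eqref{e2} into $(-\Delta + V) u_{\zeta_1} = 0$ and using $\zeta_1 \cdot \zeta_1 = 0$, which follows from the definition \eqref{e1} of $\cZ_\theta$, reduces the problem to finding $v_{\zeta_1} \in H_{per}^{2}(\check{\Omega})$ with
\[ L_{\zeta_1} v_{\zeta_1} + V v_{\zeta_1} = - V \quad \mbox{in}\ \check{\Omega}. \]

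Next, I would build a right inverse $G_{\zeta_1}$ for $L_{\zeta_1}$ on the space of $x_1$-periodic $L^2$-functions, enjoying the mapping properties $G_{\zeta_1} : L^2(\check{\Omega}) \to H^s(\check{\Omega})$ with operator norm $\leq C_s \tau^{s-1}$ for all $s \in [0,2]$. Expanding in Fourier series in $x_1$ reduces the problem to solving, for each $n \in \Z$, the $2$D equation
\[ \left( -\Delta' - 2 \zeta_1' \cdot \nabla' + A_n \right) w_n = -V_n \quad \mbox{in}\ \omega, \]
with $\zeta_1' := -\tau \xi + i \eta / 2 + i \ell'$ and $A_n := 4 \pi^2 n^2 + 4 \pi n ( \pi k + \ell_1 )$, which can be treated by a Faddeev-type Green function after extending $V_n$ by zero outside $\omega$. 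The careful choice of the shift $\ell$ in \eqref{e4}, together with its parity-dependent split, is what ensures non-degeneracy of the Fourier symbol $\abs{\xi'}^2 + A_n - 2 i \zeta_1' \cdot \xi'$ away from a null set and yields the $L^2 \to L^2$ gain $C / \tau$, uniformly in $n$. The higher-order bound is then obtained by interpolating with the standard $H^2$-elliptic estimate.

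Finally, the equation for $v_{\zeta_1}$ is rewritten as the fixed-point identity $v_{\zeta_1} = - G_{\zeta_1} V - G_{\zeta_1} ( V v_{\zeta_1} )$ and solved by Neumann series in $L^2(\check{\Omega})$. Since $\norm{V}_{L^\infty(\check{\Omega})} \leq M_+$, the operator $f \mapsto G_{\zeta_1} ( V f )$ has $L^2 \to L^2$ norm bounded by $C M_+ \tau^{-1}$, which is strictly less than $1/2$ as soon as $\tau \geq \tau_1$ for some $\tau_1 = \tau_1(\omega, M_\pm) > 0$. Inverting $I + G_{\zeta_1} V$ yields $v_{\zeta_1} = - (I + G_{\zeta_1} V)^{-1} G_{\zeta_1} V$, and the bound \eqref{e9} follows by combining the $L^2$-convergence of the series with the $L^2 \to H^s$ mapping properties of $G_{\zeta_1}$. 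The main obstacle is the second step: on a periodic cylinder with a non-rectangular cross-section, the set where the Fourier symbol of $L_{\zeta_1}$ degenerates is non-empty, and the delicate tuning of $\ell_1$ and $\ell'$ in \eqref{e4}, together with the parity split between even and odd $k$, is precisely what is needed to keep this resonant set harmless and to recover the $O(\tau^{-1})$ gain on the right inverse uniformly in the Fourier parameter $n$.
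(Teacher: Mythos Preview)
The paper does not prove this lemma at all: it is quoted verbatim from \cite[Lemma~4.1]{CKS2} and no argument is reproduced here. So there is no ``paper's own proof'' to compare against beyond the citation.

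Your outline is the standard Sylvester--Uhlmann construction adapted to the periodic cell, and it is the natural route (and presumably the one taken in \cite{CKS2}). The reductions in your first paragraph are correct: since $\zeta_1\in\cZ_\theta$ forces $\zeta_{1,1}\in i(\theta+2\pi\Z)$, one has $e^{\zeta_{1,1}}=e^{i\theta}$, so quasi-periodicity of $u_{\zeta_1}$ is equivalent to $1$-periodicity of $v_{\zeta_1}$; and $\zeta_1\cdot\zeta_1=0$ gives the conjugated equation $(-\Delta-2\zeta_1\cdot\nabla+V)v_{\zeta_1}=-V$. The Fourier decomposition in $x_1$, extension of each mode by zero to $\R^2$, and inversion of the Faddeev symbol $|\xi'|^2+A_n-2i\zeta_1'\cdot\xi'$ is the right mechanism; the $\OO(\tau^{-1})$ gain on $L^2$ comes from the imaginary part $2\tau(\xi\cdot\xi')$ of the symbol together with a lower bound on the real part along the line $\{\xi\cdot\xi'=0\}$, and you are right that the choice of $\ell$ (and the parity split in $k$) is precisely what keeps $A_n$ from hitting a resonance there. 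The Neumann series closure and the interpolation up to $H^2$ are routine once that symbol bound is in hand. In short, your plan is sound; the one place where genuine work remains is making the uniform-in-$n$ lower bound on $|p_n(\xi')|$ explicit, but you have correctly identified that as the crux.
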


The second type of test functions needed for the proof of Theorem \ref{thm1} are CGO solutions to \eqref{eq3}, vanishing on a suitable subset of the boundary $(0,1) \times \pd \omega$.
They are described in Subsection \ref{sec-cgoDBC}.

\subsection{CGO solutions vanishing on a sub-part of the boundary}
\label{sec-cgoDBC}
For $\xi \in \mathbb S^1$ and $\epsilon>0$, we set
\bel{e9b} 
\pd \omega_{\epsilon,\xi}^+:=\{ x' \in \pd \omega;\  \xi \cdot \nu'(x') > \epsilon\}\ \mbox{and}\ \pd \omega_{\epsilon,\xi}^-:=\{ x' \in \pd \omega;\  \xi \cdot \nu'(x') \leq \epsilon \},
\ee
and we write $\check{\Gamma}_{\epsilon,\xi}^\pm$ instead of $(0,1) \times \pd \omega_{\epsilon,\xi}^\pm$, in the sequel.

Bearing in mind that $F'$ is a closed neighborhood in $\pd \omega$, of the subset $\pd \omega_{\xi_0}^+$ defined by \eqref{a0b}, we pick $\epsilon>0$ so small that
\bel{e10}
\forall \xi \in \mathbb S^1,\ \left( |\xi-\xi_0| \leq \epsilon \right) \Longrightarrow \left( \pd \omega_{\epsilon,-\xi}^- \subset F' \right).
\ee
In this subsection, we aim for building solutions $u \in \cH_{\Delta,\theta}(\check{\Omega})$ of the form \eqref{e2} with $\zeta=\zeta_2$, where $\zeta_2$ is given by \eqref{e4}-\eqref{e5} and \eqref{e7}, to the BVP
\bel{eq4}
\left\{
\begin{array}{rcll} 
(-\Delta + V ) u & = & 0, & \mbox{in}\ \check{\Omega},\\ 
u(1,\cdot ) & = & e^{i\theta} u(0,\cdot), &  \mbox{on}\ \omega, \\
\pd_{x_1} u(1,\cdot ) & = & e^{i\theta} \pd_{x_1} u(0,\cdot), &  \mbox{on}\ \omega, \\
u & = & 0, & \mbox{on}\ \check{\Gamma}_{\frac{\epsilon}{2},-\xi}^+.
\end{array}
\right.
\ee


The result we have in mind is as follows.

\begin{proposition}
\label{pr1} 
Let $V$, $\theta$, $k$, $\eta$, $\xi$, and $\tau$, be the same as in Lemma \ref{lm-cgo1}, and let 
$\zeta_2$ be defined by \eqref{e7}. Then, one can find $\tau_2>0$, so that for each $\tau \geq \tau_2$, there exists $v_{\zeta_2} \in \cH_{\Delta,0}(\check{\Omega})$ such that the function $u_{\zeta_2}$, defined by \eqref{e2} with $\zeta=\zeta_2$, is a $\cH_{\Delta,\theta}(\check{\Omega})$-solution to \eqref{eq4}. Moreover, $v_{\zeta_2}$ satisfies
\bel{e13}
\| v_{\zeta_2} \|_{L^2(\check{\Omega})}\leq C \tau^{-\frac{1}{2}},\ \tau \geq \tau_2,
\ee
for some constant $C>0$ depending only on $\omega$, $M_+$ and $F'$.
\end{proposition}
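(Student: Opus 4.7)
The plan is to reduce the BVP \eqref{eq4} to an equivalent equation for the correction term $v_{\zeta_2}$, and then to produce $v_{\zeta_2}$ by a duality argument based on a Carleman estimate adapted to the quasi-periodic faces $\{0,1\} \times \omega$.

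\textbf{Step 1: Reformulation.} Substituting the ansatz \eqref{e2} with $\zeta = \zeta_2$ into the first line of \eqref{eq4}, and using both $\zeta_2 \cdot \zeta_2 = 0$ (a direct consequence of $\zeta_2 \in \cZ_\theta$) and \eqref{e3}, one obtains
$$ P_{\zeta_2} v_{\zeta_2} := (-\Delta - 2 \zeta_2 \cdot \nabla + V) v_{\zeta_2} = -V \quad \text{in } \check{\Omega}. $$
The specific choice of $\ell_1$ in \eqref{e4} makes $e^{\zeta_2 \cdot x}$ itself $\theta$-quasi-periodic in $x_1$ (both for its value and its $x_1$-derivative), so the quasi-periodicity requirements of \eqref{eq4} amount to $v_{\zeta_2}$ being $1$-periodic in $x_1$, i.e., to $v_{\zeta_2} \in \cH_{\Delta,0}(\check{\Omega})$. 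The prescription $u_{\zeta_2} = 0$ on $\check{\Gamma}^+_{\frac{\epsilon}{2},-\xi}$ then becomes $v_{\zeta_2} = -1$ on the same set.

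\textbf{Step 2: Carleman estimate.} The key technical ingredient is a Carleman estimate for the formal adjoint $P_{\zeta_2}^\ast = -\Delta + 2 \bar{\zeta}_2 \cdot \nabla + V$, valid for $w \in H^2(\check{\Omega})$ that are $1$-periodic in $x_1$ together with $\partial_{x_1} w$, and that vanish on the complementary lateral piece $\check{\Gamma}^-_{\frac{\epsilon}{2},-\xi}$. Taking the linear weight $\phi(x) = \xi \cdot x'$, so that $\nabla \phi = (0,\xi)$ and $\Delta \phi = 0$, and observing that integration by parts in $x_1$ generates no boundary term thanks to the matched phases at $x_1 = 0, 1$, a standard multiplier calculation yields
$$ \tau \norm{w}_{L^2(\check{\Omega})}^2 + \tau \int_{\check{\Gamma}^+_{\frac{\epsilon}{2},-\xi}} \abs{\xi \cdot \nu'}\, \abs{\partial_\nu w}^2\, d\sigma \leq C \norm{P_{\zeta_2}^\ast w}_{L^2(\check{\Omega})}^2, $$
uniformly for $\tau$ sufficiently large, with $C$ depending only on $\omega$, $M_+$, and $\epsilon$. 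The distinctive feature is that the $x_1$-boundary terms cancel by quasi-periodicity rather than by a boundary condition, which is precisely what prevents a direct appeal to the Carleman estimate of \cite{KSU}.

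\textbf{Step 3: Duality construction.} On the range $\{P_{\zeta_2}^\ast w : w \in \cD\} \subset L^2(\check{\Omega})$, where $\cD$ denotes the test function space from Step 2, define the linear functional
$$ L(P_{\zeta_2}^\ast w) := -\int_{\check{\Omega}} V \overline{w}\, dx - \int_{\check{\Gamma}^+_{\frac{\epsilon}{2},-\xi}} \partial_\nu \overline{w}\, d\sigma. $$
Cauchy-Schwarz applied to each term, combined with the Carleman estimate of Step 2, yields $\abs{L(P_{\zeta_2}^\ast w)} \leq C \tau^{-\frac{1}{2}} \norm{P_{\zeta_2}^\ast w}_{L^2(\check{\Omega})}$. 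Hahn-Banach extension and Riesz representation then produce $v_{\zeta_2} \in L^2(\check{\Omega})$ satisfying $\norm{v_{\zeta_2}}_{L^2(\check{\Omega})} \leq C \tau^{-\frac{1}{2}}$ and $\langle v_{\zeta_2}, P_{\zeta_2}^\ast w \rangle = L(P_{\zeta_2}^\ast w)$ for all $w \in \cD$. Unwinding this identity via Green's formula shows that $v_{\zeta_2}$ solves $P_{\zeta_2} v_{\zeta_2} = -V$ distributionally, equals $-1$ on $\check{\Gamma}^+_{\frac{\epsilon}{2},-\xi}$, and is $1$-periodic in $x_1$; since $\Delta v_{\zeta_2} \in L^2(\check{\Omega})$ follows from the equation and the boundedness of $V$, we get $v_{\zeta_2} \in \cH_{\Delta, 0}(\check{\Omega})$ and \eqref{e13} holds with $\tau_2$ taken as the threshold from the Carleman estimate.

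\textbf{Main obstacle.} The sole non-standard step is the Carleman estimate of Step 2. Unlike in \cite{KSU, CDR1, CDR2}, where full Dirichlet data on a bounded domain make all boundary contributions manageable through one sign convention, here the periodic faces require the $x_1$-boundary terms to cancel exactly, a cancellation that must be extracted carefully from the pairing of quasi-periodic conditions. Once this is properly exploited, the multiplier argument on the transverse variable $x'$ proceeds along classical lines, but the estimate must be derived from scratch, as explicitly announced in the paper's outline.
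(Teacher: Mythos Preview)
Your overall strategy---reduce to an equation for the correction, prove a Carleman inequality adapted to the quasi-periodic faces, and run a Hahn--Banach duality---is exactly the one the paper uses. The difficulty is that your Step~2, as written, does not hold.

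You take test functions $w$ that vanish only on $\check{\Gamma}^-_{\frac{\epsilon}{2},-\xi}$ and claim
\[
\tau\|w\|_{L^2(\check{\Omega})}^2+\tau\int_{\check{\Gamma}^+_{\frac{\epsilon}{2},-\xi}}|\xi\cdot\nu'|\,|\partial_\nu w|^2\,d\sigma\leq C\|P_{\zeta_2}^\ast w\|_{L^2(\check{\Omega})}^2.
\]
Two things go wrong. First, imposing $w=0$ on $\check{\Gamma}^-_{\frac{\epsilon}{2},-\xi}$ does \emph{not} force $\partial_\nu w=0$ there; since $\check{\Gamma}^+_\xi\subset\check{\Gamma}^-_{\frac{\epsilon}{2},-\xi}$, the wrong-sign boundary contribution $\tau\int_{\check{\Gamma}^+_\xi}(\xi\cdot\nu')|\partial_\nu w|^2$ survives the multiplier identity and must appear on the right-hand side. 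Second, on the portion $\check{\Gamma}^+_{\frac{\epsilon}{2},-\xi}$ where $w$ is \emph{not} assumed to vanish, the integration-by-parts calculation produces further boundary terms in $w|_{\check{\Gamma}}$ and its tangential derivatives, with no a~priori sign. Either way, the inequality you wrote is missing terms and cannot be used as the backbone of Step~3.

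The paper avoids both issues by taking test functions that vanish on the \emph{entire} lateral boundary $\check{\Gamma}$ and using the Carleman estimate of Lemma~\ref{lm-ce} (which is quoted from \cite[Corollary~5.2]{CKS2}, not derived here); that estimate carries the bad boundary term on $\check{\Gamma}^+_\xi$ on its right-hand side. The duality map is accordingly $\varrho(w)=\big((-\Delta+V)w,\ \partial_\nu w|_{\check{\Gamma}^+_\xi}\big)$ into a weighted product space, and Hahn--Banach produces a pair $(v,\tilde g)$ rather than just $v$: the component $\tilde g$ absorbs the uncontrolled trace on $\check{\Gamma}^+_\xi$. Two further points you skipped: a smooth cutoff $\psi$ is introduced so that the prescribed boundary datum lies in the weighted space $L^2(\check{\Gamma}^-_\xi\setminus\check{\Gamma}^+_\xi;|\xi\cdot\nu'|^{-1/2}dx)$, and the membership $v\in H_{\Delta,\theta}(\check{\Omega})$ (equivalently $v_{\zeta_2}\in H_{\Delta,0}(\check{\Omega})$) is not a consequence of ``unwinding Green's formula'' but is established through the Fourier-series criterion of Lemma~\ref{lm-a}. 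Your cosmetic choice of working with $v_{\zeta_2}$ and $P_{\zeta_2}$ rather than with $v=e^{\zeta_2\cdot x}v_{\zeta_2}$ and the weighted operator is harmless; the substantive gap is the Carleman inequality.
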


The proof of Proposition \ref{pr1} is postponed to Section \ref{sec-proofpr1}. 

We notice from the last line in \eqref{eq4}, that the solution $u_{\zeta_2}$ given by 
Proposition \ref{pr1} for $\tau \geq \tau_2$, verifies $\cT_{0,\theta}u_{\zeta_2}=0\ \mbox{on}\ \check{\Gamma} \setminus \check{F}$, as we have
$\check{\Gamma}\setminus \check{F} \subset \check{\Gamma} \setminus \check{\Gamma}_{\epsilon,-\xi}^- \subset
\check{\Gamma}_{\frac{\epsilon}{2},-\xi}^+$.
Therefore, it holds true that
\bel{e11}
\cT_{0,\theta}u_{\zeta_2} \in \sH_{c,\theta}(\check{F}),\ \tau \geq \tau_2.
\ee
Further, we have
\bel{e14} 
\| \cT_{0,\theta} u_{\zeta_2} \|_{\sH_\theta(\check{\Gamma})} \leq  
C \left( \| u_{\zeta_2} \|_{L^2(\check{\Omega})}
+ \| V u_{\zeta_2} \|_{L^2(\check{\Omega})} \right) \leq C(1+M_+) \| u_{\zeta_2} \|_{L^2(\check{\Omega})}, 
\ee
for some positive constant $C=C(\omega)$, by the continuity of $\cT_{0,\theta} : H_{\Delta,\theta}(\check{\Omega}) \to \sH_\theta(\check{\Gamma})$ and the first line of \eqref{eq4}.
As 
\bel{e15}
\zeta_2 \cdot x - \tau \xi \cdot x' \in i \R,\ x=(x_1,x') \in \R \times \R^2,
\ee
by \eqref{e4}-\eqref{e5} and \eqref{e7}, we infer from the identity $u_{\zeta_2} = e^{\zeta_2 \cdot x} (1+v_{\zeta_2})$, that
$$
\| u_{\zeta_2} \|_{L^2(\check{\Omega})}  \leq  \| e^{\zeta_2 \cdot x}  \|_{L^2(\check{\Omega})} + \| e^{\zeta_2 \cdot x} v_{\zeta_2} \|_{L^2(\check{\Omega})} 
\leq \| e^{\tau \xi \cdot x'} \|_{L^2(\check{\Omega})} + \| e^{\tau \xi \cdot x'} v_{\zeta_2} \|_{L^2(\check{\Omega})}, 
$$
and hence that
$$
\| u_{\zeta_2} \|_{L^2(\check{\Omega})}  \leq e^{c_\omega \tau} \left( \mbox{meas}(\omega) + \| v_{\zeta_2} \|_{L^2(\check{\Omega})}  \right),
$$
where $c_\omega:=\sup \{ | x'|,\ x' \in \omega \}$ and $\mbox{meas}(\omega)$ denotes the two-dimensional Lebesgue measure of $\omega$.
From this, \eqref{e13} and \eqref{e14}, it then follows that
\bel{e16}
\| \cT_{0,\theta} u_{\zeta_2} \|_{\sH_\theta(\check{\Gamma})} \leq C e^{c_\omega \tau},\ \tau \geq \tau_2, 
\ee
where the constant $C>0$, depends only on $\omega$, $M_+$, and $F'$.

\section{Proof of Proposition \ref{pr1}}
\label{sec-proofpr1}
The  proof of Proposition \ref{pr1} is by means of two technical results, given in Subsection \ref{sec-tools}. The first one is a Carleman estimate for the quasi-periodic Laplace operator $-\Delta+V$ in $L^2(\check{\Omega})$, which was inspired by \cite[Lemma 2.1]{BU}. The second one is an existence result for the BVP \eqref{eq2} with non zero source term.

\subsection{Two useful tools}
\label{sec-tools}
We start by recalling the following Carleman inequality, borrowed from \cite[Corollary 5.2]{CKS2}.

\begin{lemma}
\label{lm-ce}
For $M>0$ arbitrarily fixed, let $V \in L^\infty(\Omega)$ satisfy \eqref{a0} and $\| V \|_{L^{\infty}(\Omega)} \leq M$.
Then, there exist two constants $C>0$ and $\tau_0>0$, both of them depending only on $\omega$ and $M$, such that for all $\xi \in \mathbb S^1$, all $\theta \in [0,2 \pi)$, and all $w \in C^2_\theta([0,1] \times \overline{\omega})$ obeying $w_{\vert \check{\Gamma}}=0$, we have
\bea
& & C \| e^{-\tau \xi \cdot x'} w \|_{L^2(\check{\Omega})}^2 + \tau \| e^{-\tau \xi \cdot x'} (\xi \cdot \nu')^{\frac{1}{2}} \pd_\nu w \|_{L^2(\check{\Gamma}_\xi^+)}^2
\nonumber \\
& \leq & \| e^{-\tau \xi \cdot x'} (-\Delta+V) w \|_{L^2(\check{\Omega})}^2 + \tau  \| e^{-\tau \xi \cdot x'} |\xi \cdot \nu'|^{\frac{1}{2}} \pd_\nu w \|_{L^2(\check{\Gamma}_\xi^-)}^2, \label{ce}
\eea
provided $\tau \geq \tau_0$.
\end{lemma}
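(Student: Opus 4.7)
The plan is to conjugate the operator by $e^{-\tau \xi \cdot x'}$, perform a symmetric/antisymmetric splitting of the conjugated operator, and close by a directional Poincar\'e inequality. Set $v := e^{-\tau \xi \cdot x'} w$; since the weight is $x_1$-independent, $v$ inherits from $w$ both the quasi-periodicity in $x_1$ and the vanishing on $\check{\Gamma}$. A direct computation gives
\[
P_\tau := e^{-\tau \xi \cdot x'}(-\Delta) e^{\tau \xi \cdot x'} = -\Delta - 2\tau\, \xi \cdot \nabla' - \tau^2,
\]
so that $e^{-\tau \xi \cdot x'}(-\Delta + V) w = (P_\tau + V) v$ and $e^{-\tau \xi \cdot x'}\pd_\nu w = \pd_\nu v$ on $\check{\Gamma}$. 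Hence \eqref{ce} is equivalent to the same inequality rewritten for $v$ and the operator $P_\tau + V$, and the problem reduces to a lower bound for $\|(P_\tau + V) v\|^2_{L^2(\check{\Omega})}$ modulo the signed boundary terms.

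Next I write $P_\tau = P_+ + P_-$ with $P_+ := -\Delta - \tau^2$ (symmetric) and $P_- := -2\tau\, \xi \cdot \nabla'$ (antisymmetric under the quasi-periodic and Dirichlet conditions), expand $\|(P_\tau + V) v\|^2$, and compute the cross term $2\re\langle P_+ v, P_- v\rangle$ by integration by parts. The cubic-in-$\tau$ contribution $\re \int v\, \overline{\xi \cdot \nabla' v}$ vanishes because $2\re(v\, \overline{\xi \cdot \nabla' v}) = \xi \cdot \nabla'|v|^2$ integrates to zero via $v|_{\check{\Gamma}} = 0$. In the term $\re \int \Delta v\, \overline{\xi \cdot \nabla' v}$, splitting $\Delta = \pd_{x_1}^2 + \Delta'$: the $\pd_{x_1}^2$-piece vanishes after IBP in $x_1$ (the boundary traces cancel via quasi-periodicity and the residual has zero real part from $\pd_{x_1} v|_{\check{\Gamma}} = 0$), while the $\Delta'$-piece is handled by the Rellich identity $2\re(\Delta' v\, \overline{\xi \cdot \nabla' v}) = \mathrm{div}'\bigl(2\re(\nabla' v\, \overline{\xi \cdot \nabla' v}) - \xi |\nabla' v|^2\bigr)$, whose boundary integrand reduces on $\pd \omega$ (where $\nabla' v = (\pd_\nu v)\nu'$) to $(\xi \cdot \nu')|\pd_\nu v|^2$. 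Altogether,
\[
2\re\langle P_+ v, P_- v\rangle = 2\tau\!\int_{\check{\Gamma}_\xi^+}\!(\xi \cdot \nu')|\pd_\nu v|^2\, dS - 2\tau\!\int_{\check{\Gamma}_\xi^-}\!|\xi \cdot \nu'|\,|\pd_\nu v|^2\, dS,
\]
which is precisely the signed boundary pairing called for in \eqref{ce}. The remaining cross term $2\re\langle Vv, P_- v\rangle$ is controlled by Cauchy--Schwarz as $\leq \epsilon \|P_- v\|^2 + \epsilon^{-1} M^2 \|v\|^2$, which avoids differentiating the merely $L^\infty$ potential $V$.

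For the bulk lower bound I invoke a directional Poincar\'e inequality: for each fixed $x_1 \in [0,1]$, since $v(x_1, \cdot) \in H^1_0(\omega)$, the one-dimensional Poincar\'e estimate applied on every chord of $\omega$ in direction $\xi$ (both endpoints of each chord lie on $\pd \omega$, where $v$ vanishes) gives $\|v(x_1, \cdot)\|_{L^2(\omega)} \leq \pi^{-1}\,\mathrm{diam}(\omega)\,\|\xi \cdot \nabla' v(x_1, \cdot)\|_{L^2(\omega)}$, uniformly in $\xi \in \mathbb{S}^1$ and $\theta$; integrating in $x_1$ yields $\|P_- v\|^2 \geq 4\pi^2\,\mathrm{diam}(\omega)^{-2}\, \tau^2 \|v\|^2$. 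Taking $\epsilon = 1/2$, discarding $\|(P_+ + V) v\|^2 \geq 0$, and collecting the pieces gives
\[
\|(P_\tau + V) v\|^2 + 2\tau\!\int_{\check{\Gamma}_\xi^-}\! |\xi \cdot \nu'|\,|\pd_\nu v|^2 \geq \Big(\tfrac{2\pi^2 \tau^2}{\mathrm{diam}(\omega)^2} - 2M^2\Big)\|v\|^2 + 2\tau\!\int_{\check{\Gamma}_\xi^+}\!(\xi \cdot \nu')|\pd_\nu v|^2.
\]
Choosing $\tau_0$ so that the bulk coefficient exceeds a fixed $C > 0$ for all $\tau \geq \tau_0$, and translating back to $w$, delivers \eqref{ce}. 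The main obstacle is the $L^\infty$-only regularity of $V$, which precludes the natural IBP on $\int V\, \xi \cdot \nabla' |v|^2$; the Cauchy--Schwarz absorption above circumvents it, at the negligible cost of an $O(M^2)$ term in $\|v\|^2$ that is comfortably dominated by the quadratic-in-$\tau$ gain from the directional Poincar\'e estimate.
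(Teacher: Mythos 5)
Your argument is correct, and it is essentially the standard Bukhgeim--Uhlmann scheme (conjugation by the linear weight, symmetric/antisymmetric splitting of the conjugated operator, Rellich identity producing the signed boundary term, directional Poincar\'e inequality for the bulk) that the paper itself does not reproduce: Lemma \ref{lm-ce} is imported verbatim from \cite[Corollary 5.2]{CKS2}, whose proof, modeled on \cite[Lemma 2.1]{BU}, follows exactly these lines, with the quasi-periodicity handled just as you do (the weight is $x_1$-independent, so the traces at $x_1=0,1$ cancel in the integration by parts). The only discrepancy is cosmetic: your final inequality carries the coefficient $2\tau$ rather than $\tau$ on the two boundary integrals, so you have literally established \eqref{ce} with $\tau$ replaced by $2\tau$ in the $\check{\Gamma}_\xi^-$ term; this is harmless for the way the estimate is used in \eqref{f9}, where such factors are absorbed into the generic constants.
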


Here and henceforth, we write $\check{\Gamma}_\xi^\pm$ instead of $(0,1) \times \pd \omega_{\xi}^\pm$.

Armed with Lemma \ref{lm-ce}, we turn now to establishing the following existence result for the BVP \eqref{eq2} with non zero source term.

\begin{lemma}
\label{lm2}
For $M>0$, let $V \in L^{\infty}(\check{\Omega})$ be real valued and such that $\| V \|_{L^{\infty}(\check{\Omega})} \leq M$, and let
$\theta \in [0,2 \pi)$, $\xi \in \mathbb S^1$, $f \in L^2(\check{\Omega})$, and $g \in L^2(\check{\Gamma}_\xi^- \setminus \check{\Gamma}_\xi^+;\ | \xi \cdot \nu' |^{-\frac{1}{2}} dx)$.
Then, for every $\tau \in [\tau_0,+\infty)$, where $\tau_0$ is the same as in Lemma \ref{lm-ce}, there exists $v \in H_{\Delta,\theta}(\check{\Omega})$ fulfilling
\bel{i1}
\left\{
\begin{array}{rcll} 
(-\Delta + V ) v & = & f, & \mbox{in}\ \check{\Omega},\\ 
v & = & g, & \mbox{on}\ \check{\Gamma}_\xi^- \setminus \check{\Gamma}_\xi^+.
\end{array}
\right.
\ee
Moreover, $v$ satisfies the estimate
\bel{i2}
\| e^{-\tau \xi \cdot x'} v \|_{L^2(\check{\Omega})} \leq C \left( \tau^{-1}  \| e^{-\tau \xi \cdot x'} f \|_{L^2(\check{\Omega})} + \tau^{-\frac{1}{2}}  
\| e^{-\tau \xi \cdot x'} | \xi \cdot \nu' |^{-\frac{1}{2}} g \|_{L^2(\check{\Gamma}_\xi^- \setminus \check{\Gamma}_\xi^+)} \right),
\ee
for some constant $C>0$, depending only on $\omega$ and $M$.
\end{lemma}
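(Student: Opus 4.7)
The plan is to prove Lemma \ref{lm2} by a duality argument built on the Carleman inequality of Lemma \ref{lm-ce}, applied with $-\xi$ in place of $\xi$. Under this substitution the roles of $\check{\Gamma}_\xi^+$ and $\check{\Gamma}_\xi^-$ swap and the Carleman weight becomes $e^{\tau\xi\cdot x'}$, so that for every $\tau\geq\tau_0$ and every $w$ in
$$W:=\{w\in C_\theta^2([0,1]\times\overline\omega):w_{|\check\Gamma}=0\},$$
one has
\begin{equation*}
C\|e^{\tau\xi\cdot x'}w\|_{L^2(\check\Omega)}^2+\tau\|e^{\tau\xi\cdot x'}(-\xi\cdot\nu')^{1/2}\partial_\nu w\|_{L^2(\check\Gamma_\xi^-)}^2\leq\|e^{\tau\xi\cdot x'}(-\Delta+V)w\|_{L^2(\check\Omega)}^2+\tau\|e^{\tau\xi\cdot x'}(\xi\cdot\nu')^{1/2}\partial_\nu w\|_{L^2(\check\Gamma_\xi^+)}^2.
\end{equation*}
A formal integration by parts against such a $w$ — where the $\check\Gamma$-contribution involving $\partial_\nu v$ disappears because $w_{|\check\Gamma}=0$, the boundary terms at $x_1=0,1$ cancel because $v$ and $w$ share the quasi-periodic phase $e^{i\theta}$, and $V$ is real — shows that any smooth solution $v$ of \eqref{i1} is characterized by the identity
$$\int_{\check\Omega}f\overline w\,dx-\int_{\check\Gamma_\xi^-\setminus\check\Gamma_\xi^+}g\overline{\partial_\nu w}\,d\sigma=\int_{\check\Omega}v\,\overline{(-\Delta+V)w}\,dx+\int_{\check\Gamma_\xi^+}v\,\overline{\partial_\nu w}\,d\sigma,\quad w\in W.$$

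Motivated by this identity, I introduce on the subspace $Y:=\{((-\Delta+V)w,\partial_\nu w_{|\check\Gamma_\xi^+}):w\in W\}$ of the weighted Hilbert space $Z:=L^2(\check\Omega;e^{2\tau\xi\cdot x'}dx)\oplus L^2(\check\Gamma_\xi^+;e^{2\tau\xi\cdot x'}(\xi\cdot\nu')\,d\sigma)$ the conjugate-linear functional
$$\Phi((-\Delta+V)w,\partial_\nu w_{|\check\Gamma_\xi^+}):=\int_{\check\Omega}f\overline w\,dx-\int_{\check\Gamma_\xi^-\setminus\check\Gamma_\xi^+}g\overline{\partial_\nu w}\,d\sigma.$$
The Carleman inequality supplies at once the well-posedness of $\Phi$ (its kernel is trivial, since $(-\Delta+V)w=0$ combined with $\partial_\nu w_{|\check\Gamma_\xi^+}=0$ and $w_{|\check\Gamma}=0$ makes the right-hand side of the inequality vanish and forces $w\equiv 0$) and the continuity estimate
$$|\Phi(y)|\leq C\bigl(\tau^{-1}\|e^{-\tau\xi\cdot x'}f\|_{L^2(\check\Omega)}+\tau^{-1/2}\|e^{-\tau\xi\cdot x'}|\xi\cdot\nu'|^{-1/2}g\|_{L^2(\check\Gamma_\xi^-\setminus\check\Gamma_\xi^+)}\bigr)\|y\|_Z,\quad y\in Y,$$
obtained by Cauchy--Schwarz on each of the two terms in $\Phi(w)$ together with the Carleman bounds on $\|e^{\tau\xi\cdot x'}w\|_{L^2(\check\Omega)}$ and on $\|e^{\tau\xi\cdot x'}(-\xi\cdot\nu')^{1/2}\partial_\nu w\|_{L^2(\check\Gamma_\xi^-)}$.

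I then extend $\Phi$ by Hahn--Banach to $Z$ while preserving this norm, and I apply Riesz representation to produce a couple $(v,h)$ with $v\in L^2(\check\Omega;e^{-2\tau\xi\cdot x'}dx)$ and $h\in L^2(\check\Gamma_\xi^+;e^{-2\tau\xi\cdot x'}(\xi\cdot\nu')^{-1}d\sigma)$, satisfying
$$\int_{\check\Omega}v\,\overline{(-\Delta+V)w}\,dx+\int_{\check\Gamma_\xi^+}h\,\overline{\partial_\nu w}\,d\sigma=\Phi((-\Delta+V)w,\partial_\nu w_{|\check\Gamma_\xi^+}),\quad w\in W,$$
together with the bound \eqref{i2} on $\|e^{-\tau\xi\cdot x'}v\|_{L^2(\check\Omega)}$, directly inherited from the Hahn--Banach norm. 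Testing this identity first against $w\in C_c^\infty(\check\Omega)\subset W$ gives $(-\Delta+V)v=f$ in $\mathcal D'(\check\Omega)$, whence $v\in H_\Delta(\check\Omega)$.

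The main expected obstacle is then the promotion of $v$ to an element of $H_{\Delta,\theta}(\check\Omega)$ together with the trace identification $v_{|\check\Gamma_\xi^-\setminus\check\Gamma_\xi^+}=g$. Once $v\in H_\Delta(\check\Omega)$ and $(-\Delta+V)v=f$ are in hand, Green's formula is available for $v$; comparing Green's identity for generic $w\in W$ with the representation identity above, and then testing against $w$'s whose quasi-periodic traces at $x_1=0,1$ on the one hand, and Neumann traces on $\check\Gamma$ on the other hand, can be freely prescribed, makes it possible to read off the quasi-periodic matching at $x_1=0,1$ (hence $v\in H_{\Delta,\theta}(\check\Omega)$), to identify $h$ with the trace of $v$ on $\check\Gamma_\xi^+$, and to recover the Dirichlet identity $v=g$ on $\check\Gamma_\xi^-\setminus\check\Gamma_\xi^+$ in the correct trace sense. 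The fine characterization of $H_{\Delta,\theta}(\check\Omega)$ derived in the appendix (Section \ref{sec-app}) will be essential at this stage to justify the various traces in the relevant dual spaces and to close the argument in this low-regularity framework.
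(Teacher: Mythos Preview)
Your overall strategy is the paper's: replace $\xi$ by $-\xi$ in the Carleman estimate, set up the antilinear functional on the range of $w\mapsto((-\Delta+V)w,\partial_\nu w_{|\check\Gamma_\xi^+})$, bound it via Cauchy--Schwarz and Carleman, extend by Hahn--Banach, represent by Riesz to get $(v,h)$, and then test against $w\in C_c^\infty(\check\Omega)$ to obtain $(-\Delta+V)v=f$ in $L^2$. The estimate \eqref{i2} also drops out exactly as you say.

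The one place where your sketch diverges from the paper is the promotion $v\in H_\Delta(\check\Omega)\Rightarrow v\in H_{\Delta,\theta}(\check\Omega)$. You propose to apply a Green's formula for $v\in H_\Delta(\check\Omega)$ against $w\in W$ and read off the quasi-periodic matching from the $x_1=0,1$ boundary terms. The difficulty is that, before $v$ is known to be quasi-periodic, the Green identity you need carries simultaneously the lateral term $\langle v,\partial_\nu w\rangle_{\check\Gamma}$ and the end-face terms at $x_1\in\{0,1\}$, on a domain with corners; justifying this in one shot at the $H_\Delta$ level of regularity is exactly what makes this step delicate. The paper sidesteps this by testing \emph{only} against separated functions $w(x)=e^{i(\theta+2k\pi)x_1}\chi(x')$ with $\chi\in C_0^\infty(\omega)$ (so $\partial_\nu w|_{\check\Gamma}=0$ and no lateral term appears); this yields $(-\Delta'+(\theta+2k\pi)^2)\hat v_{k,\theta}=\hat h_{k,\theta}$ for each $k$, and summation gives the Fourier criterion of Lemma~\ref{lm-a}, whence $v\in H_{\Delta,\theta}(\check\Omega)$. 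Only \emph{after} this is established does the paper invoke the Green formula against general $w\in W$ (now without end-face terms) to identify $v=g$ on $\check\Gamma_\xi^-\setminus\check\Gamma_\xi^+$ and $v=h$ on $\check\Gamma_\xi^+$. Your plan is not wrong in spirit, but the paper's ordering---first Fourier criterion via compactly supported $\chi$, then lateral Green identity---is what makes the low-regularity argument clean.
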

\begin{proof}
We denote by $C_{0,\theta}^2([0,1] \times \overline{\omega})$ the set of $C_{\theta}^2([0,1] \times \overline{\omega})$-functions vanishing on the boundary $\check{\Gamma}$, i.e.
$$ C_{0,\theta}^2([0,1] \times \overline{\omega}):=\left\{ w \in C^2_\theta \left( [0,1] \times\overline{\omega} \right);\ w_{\vert \check{\Gamma}}=0 \right\}. $$
By substituting $(-\xi)$ for $\xi$ in the Carleman estimate \eqref{ce}, we get for all $w \in C_{0,\theta}^2([0,1] \times \overline{\omega})$, that
\bea
& & \tau \norm{e^{\tau \xi \cdot x'} w}_{L^2(\check{\Omega})} + \tau^{\frac{1}{2}} \norm{e^{\tau \xi \cdot x'} | \xi \cdot \nu' |^{\frac{1}{2}} \pd_\nu w}_{L^2(\check{\Gamma}_\xi^-)}
\nonumber \\
& \leq & C \left( \norm{e^{\tau \xi \cdot x'}  (-\Delta+V) w}_{L^2(\check{\Omega})} +  \norm{e^{\tau \xi \cdot x'} (\tau \xi \cdot \nu' )^{\frac{1}{2}} \pd_\nu w}_{L^2(\check{\Gamma}_\xi^+)} \right). \label{i3}
\eea
Put $\cM_\theta := \varrho \left( C_{0,\theta}^2([0,1] \times \overline{\omega}) \right) = \left\{ \varrho(w); w \in C_{0,\theta}^2 \left( [0,1] \times \overline{\omega} \right) \right\}$, where $\varrho(w) := \left( (-\Delta + V) w , \pd_\nu w_{\vert \check{\Gamma}_\xi^+} \right)$.
Since $\varrho$ is one-to-one from $C_{0,\theta}^2 \left( [0,1] \times \overline{\omega} \right)$ onto $\cM_\theta$, according to \eqref{i3}, then the antilinear form
\bel{i3b}
\Upsilon : \varrho(w) \mapsto 
\langle f , w \rangle_{L^2(\check{\Omega})}-\langle g , \pd_\nu w \rangle_{L^2(\check{\Gamma}_\xi^- \setminus \check{\Gamma}_\xi^+)},
\ee
is well defined on $\cM_\theta$, regarded as a subspace of $\sL_{\tau,\frac{1}{2}}:=L^2(\check{\Omega} ; e^{\tau \xi \cdot x'} dx) \times L^2(\check{\Gamma}_\xi^+;e^{\tau \xi \cdot x'} |\tau \xi \cdot \nu'(x')|^{\frac{1}{2}} dx)$.
Moreover, for each $w \in C_{0,\theta}^2([0,1] \times \overline{\omega})$, we have
\beas
\left| \Upsilon \left( \varrho(w) \right) \right| & \leq & 
\norm{e^{-\tau \xi \cdot x'} f}_{L^2(\check{\Omega})} \norm{e^{\tau \xi \cdot x'} w}_{L^2(\check{\Omega})}+ 
\norm{e^{-\tau \xi \cdot x'} | \xi \cdot \nu' |^{-\frac{1}{2}} g}_{L^2(\check{\Gamma}_\xi^- \setminus \check{\Gamma}_\xi^+)} \norm{e^{\tau \xi \cdot x'} | \xi \cdot \nu' |^{\frac{1}{2}} \pd_\nu w}_{L^2(\check{\Gamma}_\xi^-)}\\
& \leq &  \left( \tau^{-1} \norm{e^{-\tau \xi \cdot x'} f}_{L^2(\check{\Omega})} + \tau^{-\frac{1}{2}} \norm{e^{-\tau \xi \cdot x'} | \xi \cdot \nu' |^{-\frac{1}{2}} g}_{L^2(\check{\Gamma}_\xi^- \setminus \check{\Gamma}_\xi^+)} \right)  \\
& & \times \left( \tau \norm{e^{\tau \xi \cdot x'} w}_{L^2(\check{\Omega})} + \tau^{\frac{1}{2}} \norm{e^{\tau \xi \cdot x'} | \xi \cdot \nu' |^{\frac{1}{2}} \pd_\nu w}_{L^2(\check{\Gamma}_\xi^-)} \right) \\
& \leq & C \left( \tau^{-1} \norm{e^{-\tau \xi \cdot x'} f}_{L^2(\check{\Omega})} + \tau^{-\frac{1}{2}} \norm{e^{-\tau \xi \cdot x'} | \xi \cdot \nu' |^{-\frac{1}{2}} g}_{L^2(\check{\Gamma}_\xi^- \setminus \check{\Gamma}_\xi^+)} \right) \\
& &  \times \left( \norm{e^{\tau \xi \cdot x'}  (-\Delta+V) w}_{L^2(\check{\Omega})} +  \norm{e^{\tau \xi \cdot x'} | \tau \xi \cdot \nu' |^{\frac{1}{2}} \pd_\nu w}_{L^2(\check{\Gamma}_\xi^+)} \right),
\eeas
by \eqref{i3}, and hence
$$
\left| \Upsilon \left( \varrho(w) \right) \right| \leq 
C \left( \tau^{-1} \norm{e^{-\tau \xi \cdot x'} f}_{L^2(\check{\Omega})} + \tau^{-\frac{1}{2}} \norm{e^{-\tau \xi \cdot x'} | \xi \cdot \nu' |^{-\frac{1}{2}} g}_{L^2(\check{\Gamma}_\xi^- \setminus \check{\Gamma}_\xi^+)} \right) \| \varrho(w) \|_{\sL_{\tau,\frac{1}{2}}},
$$
where $C$ is the same constant as in \eqref{i3}.
Thus, by Hahn Banach's theorem, $\Upsilon$ extends to an antilinear form on $\sL_{\tau,\frac{1}{2}}$, still denoted by $\Upsilon$, satisfying
\bel{i5}
\norm{\Upsilon}\leq C \left( \tau^{-1} \norm{e^{-\tau \xi \cdot x'} f}_{L^2(\check{\Omega})} + \tau^{-\frac{1}{2}} \norm{e^{-\tau \xi \cdot x'} | \xi \cdot \nu' |^{-\frac{1}{2}} g}_{L^2(\check{\Gamma}_\xi^- \setminus \check{\Gamma}_\xi^+)} \right).
\ee
Therefore, there exists $(v ,\tilde{g} )$ in $\sL_{-\tau,-\frac{1}{2}} := L^2(\check{\Omega} ; e^{-\tau \xi \cdot x'} dx) \times L^2(\check{\Gamma}_\xi^+;e^{-\tau \xi \cdot x'} |\tau \xi \cdot \nu'(x')|^{-\frac{1}{2}} dx)$, the dual space to $\sL_{\tau,\frac{1}{2}}$ with pivot space
$L^2(\check{\Omega}) \times L^2(\check{\Gamma}_\xi^+)$, such that
\bel{i5b}
\Upsilon \left( \varrho (w) \right ) = \langle (v ,\tilde{g} ) , \varrho (w) \rangle_{L^2(\check{\Omega}) \times L^2(\check{\Gamma}_\xi^+)} =
\langle v , (-\Delta+V) w \rangle_{L^2(\check{\Omega})} + \langle \tilde{g} , \pd_\nu w \rangle_{L^2(\check{\Gamma}_\xi^+)},\ w \in C_{0,\theta}^2([0,1] \times \overline{\omega}).
\ee
This and \eqref{i3b} yield that
\bel{i6}
\langle v , (-\Delta+V) w \rangle_{L^2(\check{\Omega})} + \langle \tilde{g} , \pd_\nu w \rangle_{L^2(\check{\Gamma}_\xi^+)}
=
\langle f , w \rangle_{L^2(\check{\Omega})}-\langle g , \pd_\nu w  \rangle_{L^2(\check{\Gamma}_\xi^- \setminus \check{\Gamma}_\xi^+)},\ w \in C_{0,\theta}^2([0,1] \times \overline{\omega}).
\ee
Taking $w \in C_0^\infty(\check{\Omega}) \subset C_{0,\theta}^2([0,1] \times \overline{\omega})$ in \eqref{i6}, we get that $(-\Delta + V) v = f$ in the distributional sense on $\check{\Omega}$. Moreover, since $f \in L^2(\check{\Omega} ; e^{-\tau \xi \cdot x'} dx)$ and $L^2(\check{\Omega} ; e^{-\tau \xi \cdot x'} dx) \subset L^2(\check{\Omega})$, then the identity $(-\Delta + V) v = f$ holds in $L^2(\check{\Omega})$. This entails that $v \in H_{\Delta}(\check{\Omega})$.

We turn now to proving that $v \in H_{\Delta,\theta}(\check{\Omega})$. With reference to Lemma \ref{lm-a} in Appendix, it suffices to show that
\bel{i7}
\sum_{k \in \Z} \left\| (\Delta'-(\theta + 2 k \pi)^2) \hat{v}_{k,\theta} \right\|_{L^2(\omega)}^2 < \infty,
\ee
where 
\bel{i8} 
\hat{v}_{k,\theta}(x') := \langle v(\cdot,x') , \varphi_{k,\theta} \rangle_{L^2(0,1)},\ x' \in \omega,\ k \in \Z,
\ee
and
\bel{i9}
\varphi_{k,\theta}(x_1):=e^{i (\theta + 2 k \pi) x_1},\ x_1 \in (0,1),\ k \in \Z.
\ee
To do that, we fix $k \in \Z$, pick $\chi \in C_0^{\infty}(\omega)$, and apply \eqref{i6} with $w(x)=\varphi_{k,\theta}(x_1) \chi(x')$, getting
\bel{i10}
\langle v, \varphi_{k,\theta} \left( -\Delta' + (\theta + 2 k \pi)^2 \right) \chi \rangle_{L^2(\check{\Omega})} =
\langle h , \varphi_{k,\theta} \chi \rangle_{L^2(\check{\Omega})},
\ee
with $h:=f - V v$. Next, by Fubini's theorem, we have
\bea
\langle v , \varphi_{k,\theta} \left( -\Delta' + (\theta+ 2 k \pi)^2 \right) \chi \rangle_{L^2(\check{\Omega})}
& = & \int_\omega \left( \int_0^1 v(x_1,x') \overline{\varphi_{k,\theta}(x_1)} dx_1 \right) \left( -\Delta' + (\theta + 2 k \pi )^2 \right) \overline{\chi(x')} dx' \nonumber \\
& = & \langle \hat{v}_{k,\theta} ,\left( -\Delta' + (\theta + 2 k \pi)^2 \right) \chi \rangle_{L^2(\omega)} \nonumber \\
& = & \langle \left( -\Delta' + (\theta + 2 k \pi)^2 \right) \hat{v}_{k,\theta} , \chi \rangle_{(C_0^\infty)'(\omega) , C_0^\infty(\omega)}, \label{i11}
\eea
and similarly
\bel{i12} 
\langle h , \varphi_{k,\theta} \chi \rangle_{L^2(\check{\Omega})} = \langle \hat{h}_{k,\theta} , \chi \rangle_{(C_0^\infty)'(\omega) , C_0^\infty(\omega)}.
\ee
Putting \eqref{i10}--\eqref{i12} together, we find that
\bel{i13}
\left( -\Delta' + (\theta + 2 k \pi)^2 \right) \hat{v}_{k,\theta} = \hat{h}_{k,\theta},\ k \in \Z. 
\ee
Since $\{ \varphi_{k,\theta},\ k \in \Z \}$ is an Hibertian basis of $L^2(0,1)$ and $L^2(\check{\Omega}) = L^2(0,1;L^2(\omega))$, then $w \mapsto \{ \hat{w}_{k,\theta},\ k \in \Z \}$ is a unitary transform of $L^2(\check{\Omega})$ onto $\bigoplus_{k \in \Z} L^2(\omega)$. From this, \eqref{i13}, and the fact that $h \in L^2(\check{\Omega})$, it then follows that
$$ \sum_{k \in \Z} \left\| \left( -\Delta' + (\theta + 2 k \pi)^2 \right) \hat{v}_{k,\theta} \right\|_{L^2(\omega)}^2 = \sum_{k \in \Z} \| \hat{h}_{k,\theta} \|_{L^2(\omega)}^2 = \| h \|_{L^2(\check{\Omega})}^2, $$
which entails \eqref{i7}.

Further, as $v \in H_{\Delta,\theta}(\check{\Omega})$, we infer from the Green formula that
\bel{i14}
\langle v , (-\Delta+V) w \rangle_{L^2(\check{\Omega})} = \langle (-\Delta+V)  v , w \rangle_{L^2(\check{\Omega})} - \langle v , \pd_\nu w \rangle_{L^2(\check{\Gamma})},\ 
w \in C_{0,\theta}^2([0,1] \times \overline{\omega}).
\ee
Bearing in mind hat $(-\Delta+V)  v=f$, it follows from \eqref{i6} and \eqref{i14} that
\bel{i15}
\langle v , \pd_\nu w \rangle_{L^2(\check{\Gamma})} = \langle g , \pd_\nu w  \rangle_{L^2(\check{\Gamma}_\xi^- \setminus \check{\Gamma}_\xi^+)} + \langle \tilde{g} , \pd_\nu w \rangle_{L^2(\check{\Gamma}_\xi^+)},\
w \in C_{0,\theta}^2([0,1] \times \overline{\omega}).
\ee
Since $w$ is arbitrary in $C_{0,\theta}^2([0,1] \times \overline{\omega})$, and $\check{\Gamma}=\left( \check{\Gamma}_\xi^- \setminus \check{\Gamma}_\xi^+ \right) \cup  \check{\Gamma}_\xi^+$, we deduce from \eqref{i15} that $v=g$ on $\check{\Gamma}_\xi^- \setminus \check{\Gamma}_\xi^+$, and $v=\tilde{g}$ on $\check{\Gamma}_\xi^+ $.

Last, upon plugging the identity $v=\tilde{g}$ on $\check{\Gamma}_\xi^+$, into \eqref{i5b}, we find that
$$
\Upsilon \left( \varrho (w) \right ) =
\langle v , (-\Delta+V) w \rangle_{L^2(\check{\Omega})} + \langle v , \pd_\nu w \rangle_{L^2(\check{\Gamma}_\xi^+)} 
= \left \langle \left( v , v_{| \check{\Gamma}_\xi^+} \right) , \varrho (w) \right \rangle_{L^2(\check{\Omega}) \times L^2(\check{\Gamma}_\xi^+)},\ w \in C_{0,\theta}^2([0,1] \times \overline{\omega}),
$$
which may be equivalently rewritten as
$$ \Upsilon \left( h \right ) = \left \langle \left( v , v_{| \check{\Gamma}_\xi^+} \right) , h \right \rangle_{\sL_{-\tau,-\frac{1}{2}}, \sL_{\tau,\frac{1}{2}}},\ h \in \cM_\theta. $$
As a consequence we have
$\| \Upsilon \| = \left\|  \left( v , v_{| \check{\Gamma}_\xi^+} \right) \right\|_{\sL_{-\tau,-\frac{1}{2}}} = \| e^{-\tau \xi \cdot x'} v \|_{L^2(\check{\Omega})} + \| e^{-\tau \xi \cdot x'} | \tau \xi \cdot \nu' |^{-\frac{1}{2}} \pd_\nu v \|_{L^2(\check{\Gamma}_\xi^+)}$, and hence
$$  \| e^{-\tau \xi \cdot x'} v \|_{L^2(\check{\Omega})} \leq \| \Upsilon \|. $$
Putting this together with \eqref{i5}, we end up getting \eqref{i2}.
\end{proof}

Armed with Lemma \ref{lm2} we are in position to complete the proof of Proposition \ref{pr1}.

\subsection{Completion of the proof}
Let us first notice that $u_{\zeta_2}=e^{\zeta_2 \cdot x}(1+v_{\zeta_2})$ is a solution to \eqref{eq4} if and only if $v:= e^{\zeta_2\cdot x} v_{\zeta_2}$ is a solution to the BVP
\bel{p1}
\left\{
\begin{array}{rcll} 
(-\Delta + V ) v & = & f, & \mbox{in}\ \check{\Omega},\\ 
v(1,\cdot ) & = & e^{i \theta} v(0,\cdot), &  \mbox{on}\ \omega, \\
\pd_{x_1} v(1,\cdot ) & = & e^{i\theta} \pd_{x_1} v(0,\cdot), &  \mbox{on}\ \omega, \\
v & = & -e^{\zeta_2 \cdot x}, & \mbox{on}\ \check{\Gamma}_{\frac{\epsilon}{2},-\xi}^+,
\end{array}
\right.
\ee
with $f(x):=-V(x)e^{\zeta_2\cdot x}$. 

Next, we take $\psi$ in $C^\infty_0(\R^2)$ such that $\mbox{supp}\ \psi \cap \pd \omega \subset \pd \omega_{\frac{\epsilon}{3},-\xi}^+$ and $\psi(x')=1$ for $x' \in \pd \omega_{\frac{\epsilon}{2},-\xi}^+$. Since $g(x):=-e^{\zeta_2 \cdot x} \psi(x')$ vanishes for every $x \in \check{\Gamma}_{\frac{\epsilon}{3},-\xi}^-$, then we see that $|\xi \cdot \nu'|^{-\frac{1}{2}} g \in L^2(\check{\Gamma}_\xi^- \setminus \check{\Gamma}_\xi^+)$. Therefore, we may apply Lemma \ref{lm2} for the above defined functions $f$ and $g$. We get that \eqref{i1} admits a solution $v \in H_{\Delta,\theta}(\check{\Omega})$, which is obviously a solution to \eqref{p1} as well. 

Further, remembering \eqref{e16}, we find by combining the identity $v_{\zeta_2}=e^{-\zeta_2\cdot x} v$ with the estimate \eqref{i2}, that
\beas
\| v_{\zeta_2} \|_{L^2(\check{\Omega})}   
= \| e^{-\tau \xi \cdot x'} v \|_{L^2(\check{\Omega})} 
& \leq & C \left( \tau^{-1} \| V \|_{L^2(\check{\Omega})} + \tau^{-\frac{1}{2}} \| | \xi \cdot \nu' |^{-\frac{1}{2}} \psi \|_{L^2(\check{\Gamma}_\xi^-)} \right) \\
& \leq & C \tau^{-\frac{1}{2}} \left( \tau^{-\frac{1}{2}} M \mbox{meas}(\omega) + \epsilon^{-\frac{1}{2}} \|  \psi \|_{L^2(\check{\Gamma}_\xi^-)} \right).
\eeas
This yields \eqref{e13} and terminates the proof of Proposition \ref{pr1}.

\section{Proof of  Theorem \ref{thm2}}
\label{sec-proofthm2}
The derivation of Theorem \ref{thm2} follows the same path as the proof of \cite[Theorem 3.3]{CKS2}, the only difference being the introduction of the CGO solutions described by Proposition \ref{pr1}. The main technical task here is to establish the estimate \eqref{f0} stated in Lemma \ref{lm-pre}, below, which is similar to \cite[Lemma 6.1]{CKS2}. 

To this end, we start by setting the working parameter $\epsilon$, appearing in \eqref{e10}. More precisely, since $G'$ is a closed neighborhood of $\pd \omega_{\xi_0}^-$, we assume upon possibly shortening $\epsilon>0$, that
\bel{f1}
\forall \xi \in \mathbb S^1,\ \left( |\xi-\xi_0| \leq \epsilon \right) \Longrightarrow \left( \pd \omega_{\epsilon,-\xi}^- \subset F'\ \mbox{and}\ \pd \omega_{\epsilon,\xi}^- \subset G' \right).
\ee
Next, we refer to \eqref{e4}-\eqref{e5} and choose $r_*>0$ so large that 
\bel{f3} 
\left( r \geq r_* \right) \Longrightarrow \left( \tau \geq \max \{ \tau_j,\ j=0,1,2 \} \right),
\ee
where $\tau_0$, $\tau_1$, and $\tau_2$, are the same as in Lemma \ref{lm-ce}, Lemma \ref{lm-cgo1}, and Proposition \ref{pr1}, respectively. Thus, putting
$$ V(x):= \left\{ \begin{array}{cl} (V_2-V_1)(x) & \mbox{if}\ x \in \check{\Omega} \\ 0, & \mbox{if}\ x \in (0,1) \times (\R^2 \setminus \omega), \end{array} \right. $$
we have the following result.

\begin{lemma}
\label{lm-pre}
Let $\epsilon$ be defined by \eqref{f1} and let $r_*$ be as in \eqref{f3}. Then the estimate
\bel{f0}
\left| \int_{(0,1) \times \R^2} V(x_1,x') e^{-i(2 \pi kx_1+\eta\cdot x')} dx_1 d_x' \right|^2 \leq C \left(\frac{1}{\tau}+e^{C' \tau}\norm{\Lambda_{{V}_2,\theta}-\Lambda_{{V}_1,\theta}}^2\right),
\ee
holds for all $r \geq r_*$, all $\xi \in \mathbb S^1$ such that $\abs{\xi-\xi_0} \leq \epsilon$, all $\eta \in \R^2 \setminus \{0\}$ satisfying $\xi \cdot \eta=0$, and all $k \in \mathbb Z$.
Here the positive constants $C$ and $C'$ depend only on $\omega$, $M_\pm$, $F'$, $G'$, and $\xi_0$.
\end{lemma}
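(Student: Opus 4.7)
The plan is to establish \eqref{f0} through an Alessandrini-type integral identity that couples the two CGO solutions of Section \ref{sec-gos}. Thanks to \eqref{e8}, $\zeta_1+\overline{\zeta_2}=i(2\pi k,\eta)$, so the product $u_{\zeta_1}\overline{u_{\zeta_2}}$ carries the oscillating exponential $e^{i(2\pi k x_1+\eta\cdot x')}$ modulo the multiplicative remainders $(1+v_{\zeta_1})(1+\overline{v_{\zeta_2}})$. Substituting the CGO expansions into $\int_{\check{\Omega}}(V_2-V_1)u_{\zeta_1}\overline{u_{\zeta_2}}\,dx$ and invoking \eqref{e9} and \eqref{e13} shows that this integral reproduces the target Fourier coefficient of $V_2-V_1$ up to an $O(\tau^{-1/2})$ error, which squares to the $1/\tau$ term on the right-hand side of \eqref{f0}.

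The concrete construction proceeds as follows. Let $u_{\zeta_2}$ be the CGO for $V_2$ supplied by Proposition \ref{pr1}, whose Dirichlet trace $f:=\cT_{0,\theta}u_{\zeta_2}$ lies in $\sH_{c,\theta}(\check{F})$ and obeys \eqref{e16}, and let $u_{\zeta_1}$ be the smooth CGO for $V_1$ from Lemma \ref{lm-cgo1}. Denote by $\tilde u_1\in H_{\Delta,\theta}(\check{\Omega})$ the unique solution to \eqref{eq2} with potential $V_1$ and Dirichlet data $f$. Since $\tilde u_1$ is $\theta$-quasi-periodic while $\overline{u_{\zeta_2}}$ is $(-\theta)$-quasi-periodic in $x_1$, the contributions of the caps $\{0,1\}\times\omega$ cancel in the Green identity for the pair $(\tilde u_1,\overline{u_{\zeta_2}})$, producing
\begin{equation*}
\int_{\check{\Omega}}(V_1-V_2)\tilde u_1\overline{u_{\zeta_2}}\,dx=\int_{\check{F}}\bar f\,\bigl(\partial_\nu\tilde u_1-\partial_\nu u_{\zeta_2}\bigr)\,dS,
\end{equation*}
where the symmetry $\int f\,\overline{\partial_\nu u_{\zeta_2}}=\int\bar f\,\partial_\nu u_{\zeta_2}$ (valid for real $V_2$) has been used to combine the two boundary terms. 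Writing $\tilde u_1=u_{\zeta_1}+r$, where $r$ solves the homogeneous $V_1$-problem with Dirichlet data $f-u_{\zeta_1}|_{\check{\Gamma}}$, and applying Green's identity once more between $r$ and $\overline{u_{\zeta_2}}$, the residual $\int(V_1-V_2)r\,\overline{u_{\zeta_2}}$ is recast as a boundary integral that the CGO estimates \eqref{e9}, \eqref{e13}, and \eqref{e16} absorb into the $O(\tau^{-1/2})$ error above.

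The main obstacle is the partial-data mismatch: the boundary integral above is supported in $\check{F}$, whereas $\partial_\nu\tilde u_1-\partial_\nu u_{\zeta_2}$ coincides with $(\Lambda_{V_1,\theta}-\Lambda_{V_2,\theta})f$ only on $\check{G}$. Splitting $\check{F}=(\check{F}\cap\check{G})\cup(\check{F}\setminus\check{G})$, the contribution on $\check{F}\cap\check{G}\subset\check{G}$ is estimated by the duality pairing $\sH_{c,\theta}(\check{F})\times L^2(\check{G})$ through $\|\Lambda_{V_1,\theta}-\Lambda_{V_2,\theta}\|\,\|f\|_{\sH_{c,\theta}(\check{F})}^2$, and \eqref{e16} then yields the factor $e^{C'\tau}$ in \eqref{f0}. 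The residual on $\check{F}\setminus\check{G}$ is dealt with via \eqref{f1}, which places this set inside the $\xi$-illuminated face $\check{\Gamma}_{\epsilon,\xi}^+$, combined with the Carleman estimate of Lemma \ref{lm-ce} applied to $w:=\tilde u_1-u_{\zeta_2}$: since $w$ vanishes on $\check{\Gamma}$ and satisfies $(-\Delta+V_1)w=(V_2-V_1)u_{\zeta_2}$, the Carleman inequality controls the weighted $L^2$-norm of $\partial_\nu w$ on the illuminated face in terms of the same norm on the shadowed face $\check{\Gamma}_\xi^-\subset\check{G}$, which is in turn dominated by $\|(\Lambda_{V_1,\theta}-\Lambda_{V_2,\theta})f\|_{L^2(\check{G})}$, plus a $\tau^{-1}$ source contribution. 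Collecting all the estimates and squaring yields \eqref{f0}.
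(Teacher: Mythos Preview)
Your plan in the first paragraph is correct and matches the paper, but the execution in the second and third paragraphs has a fatal weight mismatch. You apply Green's formula between $\tilde u_1$ and $\overline{u_{\zeta_2}}$, so the boundary integrand carries the factor $\bar f=\overline{u_{\zeta_2}}\big|_{\check{\Gamma}}$; since $\re\zeta_2=(0,\tau\xi)$ by \eqref{e7}, this factor has modulus of order $e^{+\tau\xi\cdot x'}$. The Carleman estimate \eqref{ce}, however, only controls $e^{-\tau\xi\cdot x'}\partial_\nu w$ on $\check{\Gamma}_\xi^+$, i.e.\ with the \emph{opposite} weight. Converting from one weight to the other costs a factor $e^{2c_\omega\tau}$ in front of the source contribution, so the $\tau^{-1/2}$ coming from $\|e^{-\tau\xi\cdot x'}(V_2-V_1)u_{\zeta_2}\|_{L^2(\check{\Omega})}$ becomes $e^{2c_\omega\tau}\tau^{-1/2}$ and the clean $1/\tau$ term in \eqref{f0} is destroyed. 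The paper avoids this by pairing $u:=\tilde u_1-u_{\zeta_2}$ (your $w$, which has zero Dirichlet trace) with $\overline{u_{\zeta_1}}$ rather than with $\overline{u_{\zeta_2}}$: because $\re\zeta_1=(0,-\tau\xi)$, the boundary factor $\overline{u_{\zeta_1}}$ supplies \emph{exactly} the Carleman weight $e^{-\tau\xi\cdot x'}$, and the resulting boundary integral $\int_{\check{\Gamma}}(\partial_\nu u)\,\overline{u_{\zeta_1}}$ is split over $\check{\Gamma}_{\xi,\epsilon}^\pm$ rather than over $\check{F}\cap\check{G}$ and $\check{F}\setminus\check{G}$. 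With this pairing the volume term is already $\int_{\check{\Omega}}V u_{\zeta_2}\,\overline{u_{\zeta_1}}$, so neither the decomposition $\tilde u_1=u_{\zeta_1}+r$ nor a second Green identity is needed.

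Two further gaps would also need attention. First, Green's formula between $\tilde u_1$ and $\overline{u_{\zeta_2}}$ is not justified as written, since both lie only in $H_{\Delta,\theta}(\check{\Omega})$; in the paper the test function is the smooth CGO $u_{\zeta_1}\in H^2_\theta(\check{\Omega})$ (Lemma~\ref{lm-cgo1}), and the function $u$ to which \eqref{ce} is applied belongs to $H^2_\theta(\check{\Omega})$ by elliptic regularity for \eqref{f6}. Second, your claim that the $r$-contribution is $O(\tau^{-1/2})$ is unsupported: the second Green identity produces a boundary term $\int_{\check{\Gamma}}u_{\zeta_1}\,\overline{\partial_\nu u_{\zeta_2}}$ over the \emph{entire} boundary (since $u_{\zeta_1}|_{\check{\Gamma}}$ is not supported in $\check{F}$), and none of \eqref{e9}, \eqref{e13}, \eqref{e16} gives any control on $\partial_\nu u_{\zeta_2}$ in $L^2(\check{\Gamma})$.
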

\begin{proof}
We define $\zeta_j$, $j=1,2$, as in \eqref{e7} and, we denote by 
\bel{f3b}
u_{\zeta_1}(x)=( 1+ v_{\zeta_1}(x) ) e^{\zeta_1 \cdot x},\ x \in \check{\Omega},
\ee
the $\cH_{\theta}^2(\check{\Omega})$-solution to \eqref{eq3} associated with $V=V_1$, which is given by Lemma \ref{lm-cgo1}. 
Similarly, we denote by 
\bel{f4b}
u_{\zeta_2}(x)=( 1+ v_{\zeta_2}(x) ) e^{\zeta_2 \cdot x},\ x \in \check{\Omega},
\ee
the $\cH_{\Delta,\theta}(\check{\Omega})$-solution to \eqref{eq4}, where $V_2$ is substituted for $V$, defined by Proposition \ref{pr1}. 

Then we notice that if $w_1$ solves the BVP
\bel{f5}
\left\{
\begin{array}{rcll}
 (-\Delta + V_1 )w_1 & = & 0, & \mbox{in}\ \check{\Omega},
\\
w_1 & = & \mathcal T_{0,\theta} u_{\zeta_2}, & \mbox{on}\ \check{\Gamma}, \\
w_1 (1,\cdot) - e^{i \theta} w_1(0,\cdot) & = & 0, & \mbox{on}\ \omega, \\
\pd_{x_1} w_1 (1,\cdot) - e^{i \theta} \pd_{x_1} w_1(0,\cdot) & = & 0, & \mbox{on}\ \omega,
\end{array}
\right.
\ee
then $u:=w_1-u_{\zeta_2}$ is solution to the system
\bel{f6}
\left\{ 
\begin{array}{rcll}
(-\Delta + V_1) u & = & V u_{\zeta_2}, & \mbox{in}\ \check{\Omega}, \\
u & = & 0, &\mbox{on}\ \check{\Gamma}, \\
u(1,\cdot) - e^{i\theta} u(0,\cdot) & = & 0, & \mbox{on}\ \omega, \\
\pd_{x_1} u(1,\cdot) - e^{i\theta} \pd_{x_1} u(0,\cdot) & = & 0, & \mathrm{on}\ \omega. 
\end{array}
\right.
\ee
Thus, taking into account that $(-\Delta + V_1) u_{\zeta_1}=0$ in $\check{\Omega}$, we deduce from \eqref{f6} and the Green formula that
$$
\int_{\check{\Omega}} V u_{\zeta_2} \overline{u_{\zeta_1}} d x =\int_{\check{\Omega}} (-\Delta+V_1)u \overline{u_{\zeta_1}}d x
= \int_{\check{\Gamma}} (\pd_\nu u) \overline{u_{\zeta_1}} d\sigma(x),
$$
which can be equivalently rewritten with the help of \eqref{e9b}, as
\bel{f7}
\int_{\check{\Omega}} Vu_{\zeta_2}\overline{u_{\zeta_1}} d x
=\int_{\Gamma_{\xi,\epsilon}^+} (\pd_\nu u ) \overline{u_{\zeta_1}} d\sigma(x) +\int_{\Gamma_{\xi,\epsilon}^-} (\pd_\nu u) \overline{u_{\zeta_1}} d\sigma(x).
\ee
Further, we infer from \eqref{f3b}, the estimate \eqref{e9} with $s=1$, and the continuity of the trace from $H^1(\check{\Omega})$ into $L^2(\check{\Gamma})$, that
\bea
\abs{ \int_{\Gamma_{\xi,\epsilon}^{\pm}} (\pd_\nu u) \overline{u_{\zeta_1}} d\sigma(x)} 
& \leq & \int_{\Gamma_{\xi,\epsilon}^{\pm}} \abs{(\pd_\nu u ) e^{-\tau \xi \cdot x'} (1+v_{\zeta_1}(x))} d\sigma(x') dx_1 \nonumber \\
& \leq & C \| e^{-\tau\xi\cdot x'} \pd_\nu u \|_{L^2(\Gamma_{\xi,\epsilon}^{\pm})}, \label{f8}
\eea
where $C$ is another positive constant depending only on $\omega$ and $M_+$. Moreover, we have
$$ 
\epsilon \| e^{-\tau \xi \cdot x'} \pd_\nu u \|_{L^2(\Gamma_{\xi,\epsilon}^+)}^2 \leq \| e^{-\tau \xi \cdot x'} (\xi \cdot \nu')^{\frac{1}{2}} \pd_\nu u \|_{L^2(\Gamma_{\xi,\epsilon}^+)}^2
\leq \| e^{-\tau \xi \cdot x'} (\xi \cdot \nu')^{\frac{1}{2}} \pd_\nu u \|_{L^2(\Gamma_{\xi}^+)}^2, $$
from the very definition of $\Gamma_{\xi,\epsilon}^+$ and the imbedding $\Gamma_{\xi,\epsilon}^+ \subset \Gamma_{\xi}^+$. Therefore, applying the Carleman estimate of Lemma \ref{lm-ce} to the solution $u$ of \eqref{f6}, which is possible since $\tau \geq \tau_0$, we get that
\bea
\tau \epsilon \| e^{-\tau \xi \cdot x'} \pd_\nu u \|_{L^2(\Gamma_{\xi,\epsilon}^+)}^2 
& \leq & \| e^{-\tau \xi \cdot x'} (-\Delta+V_1) u \|_{L^2(\check{\Omega})}^2 + \tau \| e^{-\tau \xi \cdot x'} |\xi \cdot \nu'|^{\frac{1}{2}} \pd_\nu u \|_{L^2(\Gamma_\xi^-)}^2 \nonumber \\
& \leq & \| e^{-\tau \xi \cdot x'} V u_{\zeta_2} \|_{L^2(\check{\Omega})}^2 + \tau \| e^{-\tau \xi \cdot x'} |\xi \cdot \nu'|^{\frac{1}{2}} \pd_\nu u \|_{L^2(\Gamma_\xi^-)}^2. \label{f9}
\eea
Here we used the fact, arising from the density of $\{ u \in \cC_\theta^2 \left([0,1] \times \overline{\omega} \right),\ u_{\vert \check{\Gamma}}=0 \}$ in $\{ u \in \cH_\theta^2 (\check{\Omega} ),\ u_{\vert \check{\Gamma}}=0 \}$, that the Carleman estimate \eqref{ce} is still valid for $u \in \cH_\theta^2 (\check{\Omega} )$ obeying $u_{\vert \check{\Gamma}}=0$.

Next, as 
$$e^{-\tau \xi \cdot x'} V u_{\zeta_2}(x) =  e^{-\tau \xi \cdot x'} V  e^{\zeta_2 \cdot x} (1+v_{\zeta_2}(x)) =  e^{-i \left( \pi k x_1 + \frac{\eta \cdot x'}{2} - \ell \cdot x \right)} V (1+v_{\zeta_2}(x)),\ x \in \check{\Omega}, $$
by \eqref{e7} and \eqref{f4b}, we have $| e^{-\tau \xi \cdot x'} V u_{\zeta_2}(x) | = | V(x) | | 1 + v_{\zeta_2}(x) |$, according to \eqref{e4}, so it follows from \eqref{e13} that
\bel{f10}
\| e^{-\tau \xi \cdot x'} V u_{\zeta_2} \|_{L^2(\check{\Omega})} \leq M_+ \left( | \omega | + \frac{C}{\tau} \right),
\ee
with $C=C(\omega,M_+,F')>0$.
Further, bearing in mind  that $\pd \omega_{\xi}^- \subset \pd \omega_{\xi,\epsilon}^-$ and $| \xi \cdot \nu' | \leq 1$ on $\pd \omega_{\xi,\epsilon}^-$, we get that
$$ \| e^{-\tau \xi \cdot x'} |\xi \cdot \nu' |^{\frac{1}{2}} \pd_\nu u \|_{L^2(\Gamma_\xi^-)} \leq \| e^{-\tau \xi \cdot x'} \pd_\nu u \|_{L^2(\Gamma_{\xi,\epsilon}^-)}. $$
This and \eqref{f9}-\eqref{f10} yield the estimate
$$
\| e^{-\tau \xi \cdot x'} \pd_\nu u \|_{L^2(\Gamma_{\xi,\epsilon}^+)}^2 
\leq \frac{C}{\epsilon} \left( \frac{1}{\tau} + \| e^{-\tau \xi \cdot x'} \pd_\nu u \|_{L^2(\Gamma_{\xi,\epsilon}^-)}^2 \right),
$$
for some constant $C=C(\omega, M_+, F')>0$.
We infer from this and \eqref{f7}-\eqref{f8}, that
\bel{f11}
\abs{\int_{\check{\Omega}} V u_{\zeta_2} \overline{u_{\zeta_1}} dx} \leq C \left( \frac{1}{\tau} + \| e^{-\tau \xi \cdot x'} \pd_\nu u \|_{L^2(\Gamma_{\xi,\epsilon}^-)}^2
\right)^{\frac{1}{2}},
\ee
where $C$ is a positive constant $C$ depending on $\omega$, $M_+$, $F'$, and $G'$. 

On the other hand, with reference to \eqref{e7}-\eqref{e8}, \eqref{f3b}, and \eqref{f4b}, we find through direct calculation that
\bel{f12}
\int_{\check{\Omega}} V u_{\zeta_2} \overline{u_{\zeta_1}} dx= \int_{(0,1) \times \R^2} e^{-i ( 2 \pi k x_1 + \eta \cdot x')} V(x_1,x') dx_1dx'+ \int_{\check{\Omega}} R(x)  dx,
\ee
where
$$ R(x) := e^{-i( 2 \pi k x_1 + \eta \cdot x')} V(x) \left( v_{\zeta_2}(x)+ \overline{v_{\zeta_1}(x)} +v_{\zeta_2}(x) \overline{v_{\zeta_1}(x)} \right),\ x=(x_1,x') \in \check{\Omega}.$$ 
Since $\| v_{\zeta_1} \|_{L^2(\check{\Omega})}$ (resp., $\| v_{\zeta_2} \|_{L^2(\check{\Omega})}$) is bounded, up to some multiplicative constant, from above by $\tau^{-1}$ (resp., $\tau^{-\frac{1}{2}}$), according to \eqref{e9} (resp., \eqref{e13}), we obtain that
$$ \abs{\int_{\check{\Omega}} R(x)  dx} \leq M_+ \left( | \omega |^{\frac{1}{2}} \left( \| v_{\zeta_1}\|_{L^2(\check{\Omega})} + \| v_{\zeta_2} \|_{L^2(\check{\Omega})} \right) + 
\| v_{\zeta_1} \|_{L^2(\check{\Omega})}\| v_{\zeta_2} \|_{L^2(\check{\Omega})} \right) \leq C \tau^{-\frac{1}{2}}, $$
where $C$ is independent of $\tau$. It follows from this and \eqref{f11}-\eqref{f12} that
\bea
\abs{\int_{(0,1) \times \R^2} e^{-i( 2 \pi k x_1 + \eta\cdot x')} V(x) dx}^2 
& \leq & C \left( \frac{1}{\tau}+ \| e^{-\tau \xi \cdot x'} \pd_\nu u \|_{L^2(\Gamma_{\xi,\epsilon}^-)}^2 \right) \nonumber \\
& \leq & C \left( \frac{1}{\tau} + e^{c_\omega \tau} \norm{\pd_\nu u}_{L^2(\Gamma_{\xi,\epsilon}^-)}^2 \right), \label{f13}
\eea
where we recall that $c_\omega:=\max \{ |x'|; x' \in \overline{\omega} \}$, and $C=C(\omega,M_\pm,F',G')>0$. Finally, upon recalling that $u=w_1-u_{\zeta_2}$, where $w_1$ is solution to \eqref{f5} and $u_{\zeta_2}$ satisfies \eqref{eq4} with $V=V_2$, we see that
$$ \pd_\nu u=(\Lambda_{{V}_2,\theta} - \Lambda_{{V}_1,\theta} ) f,\ f=\mathcal T_{0,\theta} u_{\zeta_2}. $$
Since $\pd \omega_{\xi,\epsilon}^- \subset G'$, by \eqref{f1}, we have $\norm{\pd _\nu u}_{L^2(\Gamma_{\xi,\epsilon}^-)} \leq  \norm{\Lambda_{{V}_2,\theta}-\Lambda_{{V}_1,\theta}} \norm{\mathcal T_{0,\theta}u_{\zeta_2}}_{\sH_\theta(\check{\Gamma})}$, and hence
$$
\norm{\pd _\nu u}_{L^2(\Gamma_{\xi,\epsilon}^-)} \leq C \norm{\Lambda_{{V}_2,\theta}-\Lambda_{{V}_1,\theta}} \| u_{\zeta_2} \|_{H_\Delta(\check{\Omega})}
\leq C e^{c_\omega \tau} \norm{\Lambda_{{V}_2,\theta}-\Lambda_{{V}_1,\theta}},
$$
by \eqref{e16}, where $C=C(\omega,M_+,F')>0$. This and \eqref{f13} entail \eqref{f0}.
\end{proof}

Now the result of Theorem \ref{thm2} follows from Lemma \ref{lm-pre} by arguing as in the derivation (see \cite[Section 6]{CKS2}) of \cite[Theorem 3.3]{CKS2} from \cite[Lemma 6.1]{CKS2}.

\section{Proof of Corollary \ref{cor-a}}
\label{sec-proofcor}
It is well known that if $u$ is a solution to \eqref{a-eq1}, then the Liouville transform of $u$, $v:=a^{\frac{1}{2}} u$, solves the following BVP
$$
\left\{
\begin{array}{rcll} 
(-\Delta + V_a ) v & = & 0, & \mbox{in}\ \Omega,\\ 
v & = & a^{\frac{1}{2}} f, & \mbox{on}\ \Gamma,
\end{array}
\right.
$$
where we recall that $V_a:=a^{-\frac{1}{2}} \Delta a^{\frac{1}{2}}$. Moreover, standard computations yield that
$$
\Sigma_a f=a^{\frac{1}{2}} \Lambda_{V_a} a^{\frac{1}{2}}f - a^{\frac{1}{2}} \left( \pd_\nu a^{\frac{1}{2}} \right) f,\ f \in \sK(\Gamma) \cap a_1^{-\frac{1}{2}}(\sH_c(F)),
$$
where $\Lambda_{V_a}$ are $\Sigma_a$ are defined by \eqref{es0} and \eqref{ca2}, respectively.
From this and \eqref{ca3}-\eqref{ca4}, it then follows for every $f \in \sK(\Gamma) \cap a_1^{-\frac{1}{2}}(\sH_c(F))$, that
$$
\Sigma_{a_j} f = a_1^{\frac{1}{2}}\Lambda_{V_j} a_1^{\frac{1}{2}} f - a_1^{\frac{1}{2}} \left( \pd_\nu a_1^{\frac{1}{2}} \right) f_{\vert G},\  j=1,2,
$$
where, for simplicity, $V_j$ stands for $V_{a_j}$. As a consequence we have
\bel{g2}
(\Sigma_{a_1}-\Sigma_{a_2}) f =a_1^{\frac{1}{2}}(\Lambda_{V_1}-\Lambda_{V_2}) a_1^{\frac{1}{2}}f,\  f \in \sK(\Gamma) \cap a_1^{-\frac{1}{2}}(\sH_c(F)).
\ee
Since $a_j \in \sA_\omega(c_*,M_\pm)$, $j=1,2$, it is clear that $V_j \in \sV_\omega(M_\pm)$, whence
$\Lambda_{V_1}-\Lambda_{V_2} \in \cB(\sH_c(F),L^2(G))$, by \eqref{es1}. Thus, $(\Sigma_{a_1}-\Sigma_{a_2})f \in L^2(G)$, by \eqref{g2}, and we have the the estimate
$$
\norm{(\Sigma_{a_1}-\Sigma_{a_2})f}_{L^2(G)} \leq \| a_1 \|_{L^{\infty}(\Gamma)}^{\frac{1}{2}} \norm{\Lambda_{V_1}-\Lambda_{V_2}}_{\cB(\sH_c(F),L^2(G))} \norm{a_1^{\frac{1}{2}}f}_{\sH(\Gamma)},\ f \in \sK(\Gamma) \cap a_1^{-\frac{1}{2}}(\sH_c(F)).
$$
Since the space $a_1^{-\frac{1}{2}} \sH_c(F)$ is endowed with the norm $\| a_1^{\frac{1}{2}} \cdot \|_{\sH(\Gamma)}$, this entails that
$\Sigma_{a_1}-\Sigma_{a_2}$ can be extended to a bounded operator from $a_1^{-\frac{1}{2}} \sH_c(F)$ into $L^2(G)$, fulfilling
$$
\| \Sigma_{a_1}-\Sigma_{a_2} \| \leq \| a_1 \|_{L^{\infty}(\Gamma)}^{\frac{1}{2}} \norm{\Lambda_{V_1}-\Lambda_{V_2}}_{\cB(\sH_c(F),L^2(G))}.
$$
Moreover, we notice from \eqref{ca1} and \eqref{g2} that
\bel{g4}
\norm{\Lambda_{V_1}-\Lambda_{V_2}}_{\cB(\sH_c(F),L^2(G))} \leq a_*^{-\frac{1}{2}} \norm{\Sigma_{a_1}-\Sigma_{a_2}}.
\ee
Having established \eqref{g4}, we turn now to proving the stability estimate \eqref{ca5}.
To this purpose, we put $\alpha:=a_1^{\frac{1}{2}}-a_2^{\frac{1}{2}}$, get through basic computations that
$$
\Delta \alpha=  \Delta a_1^{\frac{1}{2}} - \Delta a_2^{\frac{1}{2}} = a_1^{\frac{1}{2}} V_1 - a_2^{\frac{1}{2}} V_2=\alpha V_1 + a_2^{\frac{1}{2}}(V_1-V_2),
$$
and then deduce from \eqref{a-per} and \eqref{ca3}-\eqref{ca4}, that
\bel{g5}
\left\{
\begin{array}{rcll} 
(-\Delta + V_1 ) \alpha & = & -a_2^{\frac{1}{2}}(V_1-V_2), & \mbox{in}\ \check{\Omega},\\ 
\alpha(1,\cdot) - \alpha(0,\cdot) & = & 0, & \mbox{in}\ \omega, \\
\pd_{x_1} \alpha(1,\cdot) - \pd_{x_1} \alpha(0,\cdot) & = & 0, & \mbox{in}\ \omega, \\
\alpha & = & 0 , & \mbox{on}\ \check{\Gamma}.
\end{array}
\right.
\ee
Since $a_1 \in \sA_\omega(c_*,M_\pm)$, then \eqref{g5} admits a unique solution 
$$ \alpha \in H^1_{0,\mathrm{per}}(\check{\Omega}) :=\{ v\in H^1(\check{\Omega});\ v_{|\check{\Gamma}}=0\ \mbox{and}\ v(1,\cdot) - v(0,\cdot) = 0\ \mbox{in}\ \omega \}, $$
by the standard variational theory.
Moreover, there exists a constant $C=C(\omega,a_*,M_\pm)>0$, such that we have
\bel{g6} 
\norm{\alpha}_{H^1(\check{\Omega})} \leq C \norm{a_2^{\frac{1}{2}}(V_2-V_1)}_{(H^1_{0,\mathrm{per}}(\check{\Omega}))'} \leq C \norm{V_1-V_2}_{(H^1_{0,\mathrm{per}}(\check{\Omega}))'}.
\ee
Here, $(H^1_{0,\mathrm{per}}(\check{\Omega}))'$ denotes the dual space to $H^1_{0,\mathrm{per}}(\check{\Omega})$. 

Next, arguing as in the derivation of Theorem \ref{thm1}, we find that
$$ \norm{V_1-V_2}_{(H^1_{0,\mathrm{per}}(\check{\Omega}))'}\leq C \Phi \left( \norm{\Lambda_{V_1}-\Lambda_{V_2}}_{\cB(\sH_c(F),L^2(G))} \right), $$
so we deduce from \eqref{g4} and \eqref{g6} that
$$ \norm{a_1^{\frac{1}{2}}-a_2^{\frac{1}{2}}}_{H^1(\check{\Omega})} \leq  C \Phi \left(a_*^{-\frac{1}{2}}\norm{\Sigma_{a_1}-\Sigma_{a_2}}\right). $$
Here we used the fact that $\Phi$ is a non-decreasing function on $[0,+\infty)$.
Now, the desired result follows from this upon noticing that
\beas
\norm{a_1-a_2}_{H^1(\check{\Omega})} & = & \norm{(a_1^{\frac{1}{2}}+a_2^{\frac{1}{2}})(a_1^{\frac{1}{2}}-a_2^{\frac{1}{2}})}_{H^1(\check{\Omega})} \\
& \leq & 
a_*^{-\frac{1}{2}} \left( \norm{a_1}_{W^{1,\infty}(\check{\Omega})} + \norm{a_2}_{W^{1,\infty}(\check{\Omega})} \right) \norm{a_1^{\frac{1}{2}}-a_2^{\frac{1}{2}}}_{H^1(\check{\Omega})} \\
& \leq & 2 a_*^{-\frac{1}{2}} M_+ \norm{a_1^{\frac{1}{2}}-a_2^{\frac{1}{2}}}_{H^1(\check{\Omega})}.
\eeas

\appendix

\section{Characterizing the space $H_{\Delta,\theta}(\check{\Omega})$}
\label{sec-app}

In this appendix we establish that any function $v \in H_{\Delta}(\check{\Omega})$ satisfying the condition \eqref{i7} for some fixed $\theta \in [0, 2 \pi)$, actually belongs to $H_{\Delta}(\check{\Omega})$. 


As a warm up, we fix $\theta \in [0,2\pi)$ and notice for each $v \in H_{\Delta}(\check{\Omega})$, that
\bel{c1} 
\left( v \in H_{\Delta,\theta}(\check{\Omega}) \right) \Longleftrightarrow \left( \langle \Delta v , w \rangle_{L^2(\check{\Omega})} = \langle v , \Delta w \rangle_{L^2(\check{\Omega})},\
w \in H_\theta^2(0,1;H^2_0(\omega)) \right),
\ee
where $H^2_0(\omega)$ is the closure of $C^\infty_0(\omega)$ for the topology of $H^2(\omega)$.
This can be seen from the the following identity
\beas
& & \langle \Delta v , w \rangle_{L^2(\check{\Omega})} - \langle v , \Delta w \rangle_{L^2(\check{\Omega})} \\
& = & 
\langle \pd_{x_1} v(1,\cdot) , w(1,\cdot) \rangle_{H^{-2}(\omega),H_0^2(\omega)} -
\langle \pd_{x_1} v(0,\cdot) , w(0,\cdot) \rangle_{H^{-2}(\omega),H_0^2(\omega)} \\
& & - \left( \langle u(1,\cdot) , \pd_{x_1} w(1,\cdot) \rangle_{H^{-2}(\omega),H_0^2(\omega)} -
\langle v(0,\cdot) , \pd_{x_1} w(0,\cdot) \rangle_{H^{-2}(\omega),H_0^2(\omega)} \right),
\eeas
which holds true for any $v \in H_{\Delta}(\check{\Omega})$ and $w \in H^2(0,1;H^2_0(\omega))$, and leads to
\beas
& & \langle \Delta v , w \rangle_{L^2(\check{\Omega})} - \langle v , \Delta w \rangle_{L^2(\check{\Omega})} \\
& = & \langle e^{-i \theta} \pd_{x_1} v(1,\cdot) - \pd_{x_1} v(0,\cdot) , w(0,\cdot) \rangle_{H^{-2}(\omega),H_0^2(\omega)} 
- \langle  e^{-i \theta} v(1,\cdot) - v(0,\cdot), \pd_{x_1} w(0,\cdot) \rangle_{H^{-2}(\omega),H_0^2(\omega)},
\eeas
as soon as $w \in H_\theta^2(0,1;H^2_0(\omega))$. Therefore, the right hand side of \eqref{c1} may be equivalently reformulated as
\bea
& & \langle \pd_{x_1} v(1,\cdot) - e^{i \theta} \pd_{x_1} v(0,\cdot) , w(0,\cdot) \rangle_{H^{-2}(\omega),H_0^2(\omega)} \nonumber \\
& - & \langle v(1,\cdot) - e^{i \theta} v(0,\cdot), \pd_{x_1} w(0,\cdot) \rangle_{H^{-2}(\omega),H_0^2(\omega)} =0,\ w \in H_\theta^2(0,1;H^2_0(\omega)). \label{c2}
\eea
Taking $w(x)=e^{i \theta x_1} \sin(2 \pi x_1) \psi(x')$, where $\psi$ is arbitrary in $H^2_0(\omega)$, in \eqref{c2}, we get that
$$
\langle v(1,\cdot) - e^{i \theta} v(0,\cdot) , \psi \rangle_{H^{-2}(\omega),H_0^2(\omega)} =0,
$$
and hence that $v(1,\cdot) - e^{i \theta} v(0,\cdot)=0$. Doing the same with $w(x)=e^{i \theta x_1} \psi(x')$, we find that
$$
\langle \pd_{x_1} v(1,\cdot) - e^{i \theta} \pd_{x_1} v(0,\cdot) , \psi \rangle_{H^{-2}(\omega),H_0^2(\omega)} =0,
$$
which yields $\pd_{x_1} v(1,\cdot) - e^{i \theta} \pd_{x_1} v(0,\cdot)=0$, and proves \eqref{c1}.

Armed with \eqref{c1} we may now establish the main result of this appendix.

\begin{lemma}
\label{lm-a}
Fix $\theta \in [0,2\pi)$, and let $v \in H_{\Delta}(\check{\Omega})$ satisfy the condition \eqref{i7}. Then $v$ belongs to $H_{\Delta,\theta}(\check{\Omega})$.
\end{lemma}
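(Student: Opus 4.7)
The plan is to approximate $v$ by partial Fourier sums in the $x_1$ variable using the orthonormal basis $\{\varphi_{k,\theta}\}_{k \in \Z}$ introduced in \eqref{i9}, to show that each partial sum lies in $H_{\Delta,\theta}(\check{\Omega})$, and then to pass to the limit by exploiting the closedness of $H_{\Delta,\theta}(\check{\Omega})$ in $H_\Delta(\check{\Omega})$ granted by the characterization \eqref{c1}. As a first step I would observe that the hypothesis \eqref{i7}, together with $\hat v_{k,\theta} \in L^2(\omega)$, yields $\Delta' \hat v_{k,\theta} \in L^2(\omega)$ for every $k \in \Z$, upon writing $\Delta' \hat v_{k,\theta} = (\Delta' - (\theta + 2\pi k)^2) \hat v_{k,\theta} + (\theta + 2\pi k)^2 \hat v_{k,\theta}$. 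I would then set
\[ v_N(x_1,x') := \sum_{|k|\leq N} \hat v_{k,\theta}(x')\, \varphi_{k,\theta}(x_1), \]
and check directly that each summand -- and hence $v_N$ -- belongs to $H_{\Delta,\theta}(\check{\Omega})$: its Laplacian equals $\varphi_{k,\theta}(\Delta' - (\theta+2\pi k)^2)\hat v_{k,\theta} \in L^2(\check{\Omega})$, while the elementary identities $\varphi_{k,\theta}(1) = e^{i\theta}\varphi_{k,\theta}(0)$ and $\pd_{x_1}\varphi_{k,\theta}(1) = e^{i\theta} \pd_{x_1}\varphi_{k,\theta}(0)$ deliver the required quasi-periodic conditions.

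Next, I would show that $v_N \to v$ in $H_\Delta(\check{\Omega})$. The $L^2$-convergence of $v_N$ to $v$ is immediate from Parseval's identity, since $\{\varphi_{k,\theta}\}_{k \in \Z}$ is an orthonormal basis of $L^2(0,1)$ and $L^2(\check{\Omega}) = L^2(0,1;L^2(\omega))$. For the convergence $\Delta v_N \to \Delta v$ in $L^2(\check{\Omega})$, I would first use \eqref{i7} to define
\[ g := \sum_{k \in \Z} \varphi_{k,\theta}\,(\Delta' - (\theta + 2\pi k)^2) \hat v_{k,\theta} \in L^2(\check{\Omega}), \]
and then identify $g$ with $\Delta v$ by testing against an arbitrary $\phi \in C_0^\infty(\check{\Omega})$: compact support of $\phi$ in $(0,1)$ lets me integrate by parts in $x_1$ to obtain $\widehat{\Delta\phi}_{k,\theta} = (\Delta' - (\theta + 2\pi k)^2) \hat\phi_{k,\theta}$, and a Green-formula argument in $\omega$, valid because $\hat\phi_{k,\theta} \in C_0^\infty(\omega)$ together with $\hat v_{k,\theta}, \Delta' \hat v_{k,\theta} \in L^2(\omega)$, allows me to transfer the operator and conclude
\[ \langle g, \phi\rangle_{L^2(\check{\Omega})} = \sum_{k \in \Z} \langle \hat v_{k,\theta}, \widehat{\Delta\phi}_{k,\theta}\rangle_{L^2(\omega)} = \langle v, \Delta\phi\rangle_{L^2(\check{\Omega})} = \langle \Delta v, \phi\rangle_{L^2(\check{\Omega})}, \]
which forces $g = \Delta v$ in $L^2(\check{\Omega})$.

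Finally, the characterization \eqref{c1} shows that $H_{\Delta,\theta}(\check{\Omega})$ is a closed subspace of $H_\Delta(\check{\Omega})$: both sides of the defining identity pass to the $H_\Delta$-limit by continuity of the $L^2$ pairing. Combined with $v_N \in H_{\Delta,\theta}(\check{\Omega})$ and $v_N \to v$ in $H_\Delta(\check{\Omega})$, this yields $v \in H_{\Delta,\theta}(\check{\Omega})$. The step I expect to be the main obstacle is the identification $g = \Delta v$: it is the only point where the hypothesis \eqref{i7} is essentially used, and care is needed to keep track of the two distributional integrations by parts -- one in $x_1$ to commute the Fourier transform with the transverse operator $\Delta' - (\theta + 2\pi k)^2$, and one in $x'$ to transfer $\Delta'$ from $\hat v_{k,\theta}$ onto the Dirichlet-type test function $\hat\phi_{k,\theta}$ -- without generating spurious boundary terms.
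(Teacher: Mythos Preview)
Your argument is correct, and it shares its essential analytical content with the paper's proof --- both rest on the characterization \eqref{c1} and on the identification of $\Delta v$ with the $L^2$-convergent series $\sum_k \varphi_{k,\theta}(\Delta'-(\theta+2k\pi)^2)\hat v_{k,\theta}$ --- but the logical structure is genuinely different. The paper verifies \eqref{c1} directly: it expands both $v$ and the test function $w \in H_\theta^2(0,1;H_0^2(\omega))$ in the basis $\{\varphi_{k,\theta}\}$, and checks the identity $\langle \Delta v, w\rangle = \langle v, \Delta w\rangle$ term by term, shifting $(\Delta'-(\theta+2k\pi)^2)$ across the $L^2(\omega)$ pairing using $\hat w_{k,\theta} \in H_0^2(\omega)$. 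Your route instead packages the same computation as an approximation-plus-closedness argument: the partial sums $v_N$ lie in $H_{\Delta,\theta}(\check\Omega)$ by inspection, convergence in $H_\Delta$ is exactly the series identification (which you justify more carefully than the paper, testing against $\phi\in C_0^\infty(\check\Omega)$), and \eqref{c1} is invoked only to deduce that $H_{\Delta,\theta}(\check\Omega)$ is closed in $H_\Delta(\check\Omega)$. Your approach has the advantage that membership of $v_N$ requires no work and the closedness step is trivial; the paper's approach is marginally shorter since it avoids introducing partial sums altogether.
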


\begin{proof}
Since $v \in H_{\Delta}(\check{\Omega})$ satisfies the condition \eqref{i7}, then both identities
\bel{c4}
v = \sum_{k \in \Z} \hat{v}_{k,\theta} \varphi_{k,\theta}\ \mbox{and}\ \Delta v= \sum_{k \in \Z} \left( \Delta'-(\theta + 2 k \pi)^2 \right) \hat{v}_{k,\theta} \varphi_{k,\theta},
\ee
hold in $L^2(\check{\Omega})$, where $\hat{v}_{k,\theta}$ and $\varphi_{k,\theta}$ are defined by \eqref{i8} and \eqref{i9}, respectively. Moreover, as $L^2(\check{\Omega})=L^2(0,1;L^2(\omega))$ and $\{ \varphi_{k,\theta},\ k \in \Z \}$ is an orthonormal basis of $L^2(0,1)$, it follows from \eqref{c4} that
\bel{c5}
\sum_{k \in \Z} \| \hat{v}_{k,\theta} \|_{L^2(\omega)}^2 = \| v \|_{L^2(\check{\Omega})}^2\ \mbox{and}\ \sum_{k \in \Z} \| \left( \Delta'-(\theta + 2 k \pi)^2 \right) \hat{v}_{k,\theta} \|_{L^2(\omega)}^2 = \| \Delta v \|_{L^2(\check{\Omega})}^2.
\ee
Similarly, for $w \in H^2_\theta(0,1;H^2_0(\omega))$, it holds true that
\bel{c6}
w = \sum_{k \in \Z} \hat{w}_{k,\theta} \varphi_{k,\theta}\ \mbox{and}\ \Delta w= \sum_{k \in \Z} \left( \Delta'-(\theta + 2 k \pi)^2 \right) \hat{w}_{k,\theta} \varphi_{k,\theta},
\ee
in $L^2(\check{\Omega})$, and we have the energy estimates $\sum_{k \in \Z} \| \hat{w}_{k,\theta} \|_{H^2(\omega)}^2 = \| w \|_{L^2(0,1;H^2(\omega))}^2$, $\sum_{k \in \Z} (\theta + 2 k \pi)^4 \| \hat{w}_{k,\theta} \|_{L^2(\omega)}^2 = \| \pd_{x_1}^2 w \|_{L^2(\check{\Omega})}^2$, and 
$\sum_{k \in \Z} \| \Delta' \hat{w}_{k,\theta} \|_{L^2(\omega)}^2 = \| \Delta' w \|_{L^2(\check{\Omega})}^2$.
It follows from this and \eqref{c4}--\eqref{c6}, that
\beas
\langle \Delta v , w \rangle_{L^2(\check{\Omega})}
& = & \sum_{k \in \Z} \left\langle \left( \Delta'- ( \theta + 2 k \pi )^2 \right) \hat{v}_{k,\theta} , \hat{w}_{k,\theta} \right\rangle_{L^2(\omega)} \\
& = & \sum_{k \in \Z} \langle \Delta' \hat{v}_{k,\theta} , \hat{w}_{k,\theta} \rangle_{H^{-2}(\omega),H^2_0(\omega)} - \sum_{k \in \Z} \left\langle ( \theta + 2 k \pi)^2 \hat{v}_{k,\theta} , \hat{w}_{k,\theta} \right\rangle_{H^{-2}(\omega),H^2_0(\omega)} \\
& = & \sum_{k \in \Z} \langle \hat{v}_{k,\theta} , \Delta' \hat{w}_{k,\theta} \rangle_{L^2(\omega)} - \sum_{k \in \Z} \left\langle \hat{v}_{k,\theta} ,  ( \theta + 2 k \pi)^2 \hat{w}_{k,\theta} \right\rangle_{L^2(\omega)} \\
& = & \sum_{k \in \Z} \left\langle \hat{v}_{k,\theta} ,  \left( \Delta' - ( \theta + 2 k \pi)^2 \right) \hat{w}_{k,\theta} \right\rangle_{L^2(\omega)} \\
& = & \langle v , \Delta w \rangle_{L^2(\check{\Omega})}.
\eeas
This and \eqref{c1} yield the desired result.
\end{proof}

\begin{remark}
The assumption $v \in H_{\Delta}(\check{\Omega})$ in Lemma \ref{lm-a} can be weakened to $v \in L^2(\check{\Omega})$, as an $L^2(\check{\Omega})$-function satisfying condition \eqref{i7} is automatically in $H_{\Delta}(\check{\Omega})$. Moreover, it is not hard to see that the result of Lemma \ref{lm-a} can be improved significantly in order to provide the following characterization of the space $H_{\Delta,\theta}(\check{\Omega})$:
$$ H_{\Delta,\theta}(\check{\Omega}) = \left\{ v \in L^2(\check{\Omega}),\ v\ \mbox{satisfies}\ \eqref{i7} \right\}. $$
\end{remark}

\bigskip

\vspace*{.5cm}
\noindent {\sc Mourad Choulli}, Universit\'e de Lorraine, Institut Elie Cartan de Lorraine, CNRS, UMR 7502, Boulevard des Aiguillettes, BP 70239, 54506 Vandoeuvre les Nancy cedex - Ile du Saulcy, 57045 Metz cedex 01, France.\\
E-mail: {\tt  mourad.choulli@univ-lorraine.fr}. \vspace*{.1cm} \\

\noindent {\sc Yavar Kian}, Aix-Marseille Universit\'e, CNRS, CPT UMR 7332, 13288 Marseille, and Universit\'e de Toulon, CNRS, CPT UMR 7332, 83957 La Garde, France.\\
E-mail: {\tt yavar.kian@univ-amu.fr}. \vspace*{.1cm} \\

\noindent {\sc Eric Soccorsi}, Aix-Marseille Universit\'e, CNRS, CPT UMR 7332, 13288 Marseille, and Universit\'e de Toulon, CNRS, CPT UMR 7332, 83957 La Garde, France.\\
E-mail: {\tt eric.soccorsi@univ-amu.fr}.

\end{document}